\date{July 20, 2009}
\begin{document}

\def\root{1.7320508075}
\def\dot{circle (.10cm)}
\def\code{{\mathcal D}}
\def\part{{\mathcal P}}

\newcommand{\grid}[3]{
\begin{scope}[yshift=5cm]
\foreach \row in {1, 4,..., #1}
{
	\foreach \x in {1.73205, 3.4641, ..., #2}
	{
	\begin{scope}[] 
		\draw (\x, \row) +(30:1cm) -- +(90:1cm) -- +(150:1cm) -- +(210:1cm) -- +(270:1cm) -- +(330:1cm) -- cycle;
	\end{scope}
	}
	\foreach \x in {1.73205, 3.4641, ..., #3}
	{
	\begin{scope}[xshift = 0.8660254 cm, yshift = -1.5 cm]
		\draw (\x, \row) +(30:1cm) -- +(90:1cm) -- +(150:1cm) -- +(210:1cm) -- +(270:1cm) -- +(330:1cm) -- cycle;
	\end{scope}
	}
}
	\foreach \x in {1.73205, 3.4641, ..., #2}
	{
	\begin{scope}[yshift = -2cm]
		\draw (\x, 0) +(30:1cm) -- +(90:1cm) -- +(150:1cm) -- +(210:1cm) -- +(270:1cm) -- +(330:1cm) -- cycle;
	\end{scope}
	}
\end{scope}
}

\newcommand{\gridsmall}[2]{
\begin{scope}[yshift=5cm]
	\foreach \x in {1.73205, 3.4641, ..., #1}
	{
	\begin{scope}[] 
		\draw (\x, 1) +(30:1cm) -- +(90:1cm) -- +(150:1cm) -- +(210:1cm) -- +(270:1cm) -- +(330:1cm) -- cycle;
	\end{scope}
	}
	\foreach \x in {1.73205, 3.4641, ..., #2}
	{
	\begin{scope}[xshift = 0.8660254 cm, yshift = -1.5 cm]
		\draw (\x, 1) +(30:1cm) -- +(90:1cm) -- +(150:1cm) -- +(210:1cm) -- +(270:1cm) -- +(330:1cm) -- cycle;
	\end{scope}
	}
	\foreach \x in {1.73205, 3.4641, ..., #1}
	{
	\begin{scope}[yshift = -2cm]
		\draw (\x, 0) +(30:1cm) -- +(90:1cm) -- +(150:1cm) -- +(210:1cm) -- +(270:1cm) -- +(330:1cm) -- cycle;
	\end{scope}
	}
\end{scope}
}

\title{A New Lower Bound on the Density of \\Vertex Identifying Codes for the Infinite Hexagonal Grid}
\author{Daniel W. Cranston\thanks{
Department of Mathematics \& Applied Mathematics, Virginia Commonwealth University, Richmond, VA, 23284;  
Center for Discrete Mathematics and Theoretical Computer Science, Rutgers, Piscataway, NJ 08854. 
Email: \texttt{dcranston@vcu.edu}} 
\and Gexin Yu\thanks{
Department of Mathematics, College of William and Mary, Williamsburg, VA 23185.
Email: \texttt{gyu@wm.edu}. Research supported in part by NSF grant DMS-0852452.
}}
\maketitle

\begin{abstract}
Given a graph $G$, an identifying code $\code\subseteq V(G)$ is a vertex set such that for any two distinct vertices $v_1,v_2\in V(G)$, the sets $N[v_1]\cap\code$ and $N[v_2]\cap\code$ are distinct and nonempty (here $N[v]$ denotes a vertex $v$ and its neighbors).  We study the case when $G$ is the infinite hexagonal grid $H$.  Cohen et.al. constructed two identifying codes for $H$ with density $3/7$ and proved that any identifying code for $H$ must have density at least $16/39\approx0.410256$. Both their upper and lower bounds were best known until now.  Here we prove a lower bound of $12/29\approx0.413793$.
\end{abstract}
\bigskip

\newtheorem{theorem}{Theorem}
\newtheorem{claim}{Claim}
\newtheorem{lemma}{Lemma}
\newtheorem{prop}{Proposition}

\section{Introduction}
Identifying codes were introduced by Karpovsky et al.~\cite{KCL98} in 1998 to model fault diagnosis in multiprocessor systems.   If we model a multiprocessor as an undirected simple graph $G$,  then an {\em $(r,\le\!\ell)$-ID code} is a subset of the vertices of $G$ having the property that every  collection of at most $\ell$ vertices has a non-empty and distinct set of code vertices that are distance at most $r$ from it.    To be precise, let $N_r[X]$ be the set of vertices that are within distance $r$ of $X$ (called the ``closed $r$-neighborhood'').  An $(r,\le\!\ell)$-ID code is a subset $\code$ of $V(G)$ such that $N_r[X]\cap \code$ and $N_r[Y]\cap \code$ are distinct and non-empty for all distinct subsets $X\subseteq V(G)$ and $Y\subseteq V(G)$ with $|X|,|Y|\le\ell$.
In this paper, we consider the case $r=1$, which we denote simply as {\em $\ell$-ID codes}; we also write $N[v]$ for $N_1[v]$.

Not every graph has an $\ell$-ID code.  For example, if $G$ contains two vertices $u$ and $v$ such that $N[u] = N[v]$, then $G$ cannot have even a 1-ID code, since for any subset of vertices $\code$ we have $N[u]\cap \code =N[v]\cap \code$.  However, if, for every pair of subsets $X\not=Y$ with $|X|, |Y|\le \ell$,  we have $N[X]\not=N[Y]$, then $G$ has an $\ell$-ID code, since $\code=V(G)$ is such a code.  Hence we are usually interested in finding an $\ell$-ID code of minimum cardinality.    The most studied case is when $\ell=1$.  In this case, $\code$ is an $1$-ID code if and only if for all distinct vertices $u$ and $v$ in G, the intersections $N[u]\cap \code$ and $N[v]\cap \code$ are distinct and nonempty.  For a fixed subset of vertices $\code$, we say that vertices $u$ and $v$ are {\it distinguishable} if $N[u]\cap\code \ne N[v]\cap\code$.

Much work has focused on finding $1$-ID codes for infinite grids, see~\cite{BL05, CGHLMPZ00, CHLZ00, HL04a, HL04b}.   To measure how small an ID code can be,  we talk about the ``density'' of an infinite grid, which, roughly speaking, is the fraction of the vertices in the graph that are in the code (we give a formal definition of density after we prove Proposition~1).    

In 1998, Karpovsky, Chakrabarty, and Levitin~\cite{KCL98} considered the 6-regular, 4-regular, and 3-regular infinite grids that come from the tilings of the plane by equilateral triangles, squares, and regular hexagons. They asked the question ``What is the minimum density of an identifying code for each grid?''  A short proof shows that the answer for the 6-regular grid is density $1/4$. For the 4-regular grid,   Cohen et.~al~\cite{CGHLMPZ00} constructed codes with density $7/20$ and Ben-Haim and Litsyn~\cite{BL05} proved that $7/20$ is best possible.    For the 3-regular grid, the minimum density remains unknown.
The best upper bound is $3/7$, which comes from two codes constructed by Cohen et.~al~\cite{CHLZ00}; these same authors also proved a lower bound of $16/39$.  In this paper, we improve the lower bound to $12/29$.
Before we prove our main result, which is Theorem~\ref{mainthm}, we first prove a weaker lower bound, given in Proposition~\ref{prop1}.  The proof of Proposition~\ref{prop1} is instructive, because the proof is easy, yet the proofs of Theorem~\ref{mainthm} and Proposition~\ref{prop1} use the same core idea.
We call the components of $G[\code]$ {\it clusters} and a cluster with $d$ vertices is a {\it $d$-cluster} (a $d^+$-cluster has $d$ or more vertices).

\begin{prop}
\label{prop1}
The density of every vertex identifying code for the infinite hexagonal grid is at least 2/5.
\end{prop}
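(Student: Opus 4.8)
The plan is to use a discharging (or averaging) argument. Each vertex of the infinite hexagonal grid $H$ has degree $3$, so $|N[v]| = 4$ for every $v$. Assign to every code vertex $c \in \code$ an initial charge of $1$, so the total charge equals $|\code|$, and the density of $\code$ is the average charge per vertex. The goal is to redistribute charge so that every vertex of $H$ ends up with charge at least $2/5$; then the density is at least $2/5$.

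First I would establish the local structural constraints that the identifying code condition imposes. The key observations: (i) every vertex $v$ must be \emph{dominated}, i.e.\ $N[v] \cap \code \neq \emptyset$; and (ii) for every edge $uv$, the sets $N[u] \cap \code$ and $N[v] \cap \code$ differ, which (since $N[u]$ and $N[v]$ overlap only in $\{u,v\}$ plus possibly... actually in $H$, $N[u]\cap N[v] = \{u,v\}$ when $uv$ is an edge) forces at least one of $u, v$ to have a code neighbor outside the edge, or to lie in $\code$ asymmetrically. I would catalog, for a vertex $v$ with $|N[v]\cap\code| = 1$ (a \emph{minimally dominated} vertex), exactly where its unique code vertex sits and what this forces on nearby vertices. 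In particular I expect to show that a code vertex cannot be ``too isolated'': isolated code vertices (1-clusters) and small clusters must be surrounded by enough dominated-but-not-in-code vertices to pay out charge, and conversely every non-code vertex is close to enough code vertices. The natural discharging rule is: each code vertex $c$ sends charge $(1 - \frac{2}{5})\big/\,t$ to... more precisely, reserve $2/5$ for $c$ itself and distribute the remaining $3/5$ equally among the non-code vertices in $N[c]$ (there are at most $3$ such), sending $1/5$ to each. Then a non-code vertex dominated by $k \ge 1$ code vertices receives $k/5 \ge 1/5$, and I would need the identifying condition to guarantee $k \ge 2$ for ``most'' non-code vertices, with the deficient ones ($k=1$) compensated by nearby code vertices that send out less than their full $3/5$ (because they have a code neighbor, hence fewer than $3$ non-code neighbors to feed).

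The main case analysis is thus: classify each non-code vertex by $|N[v]\cap\code| \in \{1,2,3,4\}$, and each code vertex by the size and shape of its cluster. A non-code vertex with $|N[v]\cap\code|\ge 2$ already has charge $\ge 2/5$. The problem vertices are non-code vertices $v$ with a unique code neighbor $c$; here I would use the identifying condition on the three edges at $v$ and on edges near $c$ to show that $c$ has no code neighbor (so $c$ is a 1-cluster), hence $c$ has three non-code neighbors, but crucially I can argue that the \emph{other} two neighbors of $c$ must be dominated by a second code vertex (else they'd be indistinguishable from $c$ or from $v$), so they donate nothing back — instead I should route extra charge differently: a 1-cluster $c$ adjacent to a uniquely-dominated vertex $v$ must itself be ``surrounded'' tightly, and I'd count code vertices at distance $2$. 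This rerouting is the delicate part. The hard part will be handling the uniquely-dominated non-code vertices: making the bookkeeping balance requires showing these cannot cluster together, i.e.\ a code vertex's three neighbors cannot all be uniquely dominated, which follows because two of them would then be mutually indistinguishable. Once that local claim is nailed down, a short discharging argument with rules sending $1/5$ along edges from code to non-code vertices, plus a second-neighbor correction for 1-clusters, should close the bound at $2/5$. I expect Proposition~\ref{prop1} to fall out of essentially this single clean claim, which is why the authors call it instructive.
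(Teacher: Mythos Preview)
Your high-level plan---assign charge $1$ to each code vertex and redistribute so every vertex ends with at least $2/5$---is exactly the paper's. But your execution goes wrong at the key structural step.

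You propose sending a flat $1/5$ from each code vertex to each of its non-code neighbors, so the deficient vertices are non-code $v$ with $|N[v]\cap\code|=1$. You then claim that the unique code neighbor $c$ of such a $v$ ``has no code neighbor (so $c$ is a 1-cluster).'' This is exactly backwards: if $c$ were a 1-cluster then $N[c]\cap\code=\{c\}=N[v]\cap\code$, so $v$ and $c$ would be indistinguishable. In fact $c$ must lie in a $3^+$-cluster (2-clusters cannot occur in an identifying code). Consequently your planned ``second-neighbor correction for 1-clusters'' targets the wrong vertices; 1-clusters are the \emph{easy} case, since every neighbor of a 1-cluster automatically has a second code neighbor. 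The correction you actually need is at leaves of $3^+$-clusters, where surplus charge can be routed to an adjacent uniquely-dominated vertex (and distinguishability of the leaf's two non-code neighbors guarantees at most one of them is uniquely dominated).

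The paper avoids this rerouting entirely with a cleaner rule: $c\in\code$ sends $2/(5k)$ to each non-code neighbor $w$, where $k$ is the number of code neighbors of $w$. Then every non-code vertex receives exactly $k\cdot 2/(5k)=2/5$, and one only checks that code vertices retain $\ge 2/5$. For a 1-cluster each neighbor has $k\ge 2$, so it loses at most $3(1/5)$; for a leaf $v$ of a $3^+$-cluster, distinguishability of its two non-code neighbors forces at least one to have $k\ge 2$, so $v$ loses at most $2/5+1/5$; vertices with $d_C(v)\ge 2$ lose at most $2/5$. That is the whole proof.
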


\begin{proof}
Our proof is by the discharging method.
Thus, $\code$ must contain at least $2/5$ of the vertices.
We assign to each vertex $v$ in the identifying code $\code$ a charge of 1.  We will redistribute the charges, without introducing any new charge, so that every vertex (whether in $\code$ or not) has charge at least 2/5.
We redistribute the charge according to the following discharging rule:

\begin{itemize}
\item
If $v\in\code$ is adjacent to $w\notin\code$ and $w$ has $k$ neighbors in $\code$, then $v$ gives charge $2/(5k)$ to $w$.
\end{itemize}

Now we simply verify that after applying the discharging rule, each vertex has charge at least $2/5$.

If $v\notin\code$, then $v$ receives charge $k(2/(5k))=2/5$.
If $v$ is a 1-cluster, then note that each neighbor $w$ of $v$ has at least one other neighbor in $\code$ (otherwise $v$ and $w$ are indistinguishable).  So $v$ gives each neighbor charge at most $1/5$.  Thus, $v$ retains charge at least $1 - 3(1/5) = 2/5$.
It is easy to see that $\code$ cannot contain 2-clusters.
Let $C$ be a $3^+$-cluster, and let $v$ be a vertex in $C$.  If $d_C(v) = 1$, then $v$ has two neighbors $v_1$ and $v_2$ not in $\code$.  Since $v_1$ and $v_2$ are distinguishable, at least one of them has another neighbor in $\code$.  
So the total charge $v$ gives to $v_1$ and $v_2$ is at most $2/5+1/5$;
thus $v$ retains charge at least $1 - 3/5 = 2/5$.  If $d_C(v) = 2$, then $v$ gives away charge at most $2/5$, so $v$ retains charge at least $1-2/5 = 3/5$.  Finally, if $d_C(v)=3$, then $v$ gives away no charge, so $v$ retains charge 1.  

Since the charge at each vertex (whether in $\code$ or not) is at least $2/5$, the density of $\code$ is at least $2/5$.
\end{proof}

It is instructive to note that our proof does not rely on the structure of the grid, but only that it is 3-regular.  In fact, Proposition~\ref{prop1} is a special case of a more general lower bound, which follows from a similar proof: any 1-ID code for any $k$-regular graph has density at least $1/(1+k/2)$.

When we study the proof of Proposition~\ref{prop1}, it is natural to look for ``slack'', i.e., vertices that have charge greater than $2/5$ after the discharging phase.  Of course every vertex $v$ in a $3^+$-cluster $C$ with at least two neighbors in $C$ has slack.  It is clear that every $3^+$-cluster $C$ contains at least one such slack vertex.  Our plan is to distribute this excess charge at the slack vertices among the vertices close to $v$.  We must also verify that each 1-cluster and each vertex not in $\code$ are close to some $3^+$-cluster.  This approach forms the outline of the proof of Theorem~\ref{mainthm}.

We think our proof of Theorem~\ref{mainthm} could be refined to give a better lower bound.  However, we think that bound would be only slightly better than Theorem~\ref{mainthm}, and that the proof would be much more complicated, so we have not attempted it.

Now we formally define density.  We fix an arbitrary vertex $v\in V(G)$ and let $V_h$ denote all the vertices that are distance at most $h$ from $v$.  The {\it density} of a code $\code$ is defined as $$\limsup_{h\to\infty}\frac{|\code\cap V_h|}{|V_h|}.$$
We note that since each vertex in $V_h$ finishes with charge at least $12/29$, the sum of the charges at vertices in $V_h$ is at least $12|V_h|/29$.  We should remark that some vertices in $V_h$ may receive charge from vertice in $\code\setminus V_h$.  However, the sum of the charges sent from vertices in $\code\setminus V_h$ to vertices in $V_h$ is linear in $h$, whereas $|V_h|$ is quadratic in $h$.  Thus, we see that $12/29$ is a lower bound on the density of $\code$.

The organization of the rest of the paper is as follows.  In Section~2 we prove the main result.  This proof consist of stating six discharging rules, and verifying that after the discharging phase, each vertex has charge at least $12/29$.  Showing that each vertex finishes with sufficient charge is a lengthy task.  To simplify the analysis, we state and prove five structural lemmas and three claims about the discharging process.  The difference between our claims and our structural lemmas is that the claims are statements about our discharging rules, whereas the lemmas are statements about any 1-ID code in the infinite hexagonal grid.  So to simplify the proof of the main result, we defer the proofs of the five structural lemmas until Section~3.

\section{Main Result}
Let $v$ be a 1-cluster.  
If $v$ has a neighbor $u\notin\code$, such that all three neighbors of $u$ are in $\code$, then we say that $v$ is {\it crowded}; otherwise $v$ is {\it uncrowded}.
Let $C$ be a 3-cluster and let $w$ be the non-leaf vertex of $C$; we call $w$ the {\it center} of $C$.  If the neighbor of $w$ that is not in $C$ has no other neighbor in $\code$, then we call $C$ an {\it open} 3-cluster and we call $w$ an {\it open} center.
Otherwise we call $C$ a {\it closed} 3-cluster. 
For each leaf $v$ of $C$, there must exist a $w\in\code$ at distance two from $v$ and not in $C$ (otherwise the two neighbors of $v$ not in $C$ would be indistinguishable).  However, a leaf may have two or more such vertices at distance two.  If any vertex $v$ in $C$, leaf or center, has at least two vertices $w_1\in\code$, $w_2\in\code$ at distance two (and neither $w_1$ nor $w_2$ is in $C$), then we call $C$ {\it crowded}; otherwise $C$ is {\it uncrowded}.
The significance of a closed or crowded 3-cluster $C$ is that $C$ will have extra help when sending charge to its adjacent vertices. 

We say that an uncrowded 1-cluster $C_1$ is {\it nearby} a $3^+$-cluster $C_2$ if $C_1$ is within distance three of either a $4^+$-cluster $C_2$, a closed 3-cluster $C_2$, or an open center of a 3-cluster $C_2$. 
Similarly, we say that an uncrowded open 3-cluster $C_1$ is {\it nearby} a $3^+$-cluster $C_2$ if $C_1$ is within distance three of either a $4^+$-cluster $C_2$ or a closed 3-cluster $C_2$, or if both its leaves are within distance three of an open 3-cluster $C_2$. 
We will show in Lemma~\ref{lem1} that each uncrowded 1-cluster $v$ is nearby a $3^+$-cluster.
%
If an open 3-cluster $C$ is uncrowded, has no open 3-clusters within distance two, and has no closed 3-clusters or $4^+$-clusters within distance three, then we say that $C$ is {\it threatened}.  
If an uncrowded 1-cluster has no $4^+$-cluster within distance three and has no nearby unthreatened 3-cluster, then we say that $v$ is {\it threatened}.
If a threatened 3-cluster $C$ has at least four nearby threatened 1-clusters and threatened 3-clusters, then we say that $C$ is {\it needy}.  Whereas clusters that are closed or crowded already have extra help sending charge, clusters that are uncrowded, threatened, or needy will likely need to receive extra charge from elsewhere.
\begin{theorem}
\label{mainthm}
The density of every vertex identifying code for the infinite hexagonal grid is at least 12/29.
\end{theorem}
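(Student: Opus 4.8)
The plan is to extend the discharging argument of Proposition~\ref{prop1} by increasing the target charge from $2/5$ to $12/29$ and extracting the ``slack'' already identified: every $3^+$-cluster contains a vertex of degree at least two inside the cluster, and such a vertex retains charge strictly more than $2/5$ after the basic redistribution. First I would keep the Proposition~\ref{prop1} rule (a code vertex $v$ adjacent to $w\notin\code$ with $k$ code-neighbors sends $2/(5k)$ to $w$, though rescaled toward $12/29$), and then add five more rules that push the excess from slack vertices outward: from interior cluster vertices to leaves of their own cluster, from closed or crowded $3$-clusters to nearby uncrowded $1$-clusters and open $3$-clusters, from $4^+$-clusters to nearby threatened clusters, and from needy $3$-clusters to their nearby threatened neighbors. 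The definitions of \emph{crowded}, \emph{closed}, \emph{open}, \emph{nearby}, \emph{threatened}, and \emph{needy} are exactly tuned so that the recipients of charge in one rule are the clusters that came up short under the previous rules, and the donors always have enough surplus.

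The verification then splits by the type of the vertex. For $w\notin\code$ the calculation is local: $w$ has one, two, or three code-neighbors, and in each case one checks the incoming $2/(5k)$-type contributions plus any secondary charge suffice for $12/29$; the case $k=1$ forces $w$'s unique code-neighbor to sit in a $3^+$-cluster, which is where crowdedness of $1$-clusters enters. For a code vertex the analysis is by cluster type and by the vertex's degree within the cluster: degree-$3$ and degree-$2$ interior vertices have large surplus and act as sources; degree-$1$ (leaf) vertices and isolated ($1$-cluster) vertices are the tight cases. Here one invokes the structural lemmas --- in particular Lemma~\ref{lem1}, that every uncrowded $1$-cluster is nearby a $3^+$-cluster, and the analogous statements for open $3$-clusters --- to guarantee that every deficient vertex has a qualifying donor within distance three, and then a counting/packing argument (the other four lemmas) bounds how many deficient clusters a single donor must support, so that the donor's surplus divided among its dependents still clears $12/29$. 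The needy/threatened machinery handles the worst layer: a threatened $3$-cluster that must itself feed four or more threatened neighbors, where one shows such a configuration cannot be too dense in the grid.

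The main obstacle is precisely this last amortization: bounding the number of threatened and needy clusters that can crowd around a single source, and checking that after a threatened or needy cluster has passed charge along, it still finishes at $12/29$ rather than falling into an infinite regress of ``everyone is poor.'' This is why the threatened and needy conditions are defined to be mutually exclusive with having a rich neighbor nearby, and it is the content of the five structural lemmas deferred to Section~3 --- each is a statement, independent of the discharging rules, about how configurations of small clusters can pack into the hexagonal grid. Once those packing bounds are in hand, the final tally is a finite (if tedious) case check over cluster types, degrees within the cluster, and the crowded/closed/threatened/needy status of the cluster and its neighbors, with the remark already noted in the excerpt handling boundary effects: charge entering $V_h$ from outside is $O(h)$ while $|V_h|$ is $\Theta(h^2)$, so the per-vertex bound of $12/29$ passes to the density in the $\limsup$.
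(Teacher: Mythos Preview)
Your outline matches the paper's strategy at the top level: discharging with the basic rule rescaled to $12/(29k)$, supplementary rules routing surplus from $3^+$-clusters to deficient $1$-clusters and open $3$-clusters, the five structural lemmas of Section~3 bounding how many dependents a donor can have, and the $O(h)$ versus $\Theta(h^2)$ remark for the density limit.

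Two points, however, diverge from the paper's actual mechanics in ways that matter. First, the paper has no rule sending charge ``from interior cluster vertices to leaves of their own cluster''; it simply sums the final charge over each cluster and checks the total is at least $12m/29$. This is cosmetic. Second---and this is a genuine reversal---needy $3$-clusters do not \emph{send} charge to their threatened neighbors. A needy $3$-cluster is by definition one that has already given away too much via rules~2--4 (up to $52/29$, per Claim~\ref{cla1}) and now sits at $35/29$; rule~5 has it \emph{receive} $1/29$ from another open $3$-cluster, the one supplied by Lemma~\ref{lem5}. Your direction would make the needy cluster poorer still and is exactly the ``infinite regress'' you flag as the obstacle. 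The paper breaks the regress because the rule-5 donor is either not itself needy, or is paired with the recipient (and then neither sends), with Lemma~\ref{lem6} handling the paired case. Also load-bearing but absent from your sketch is the priority ordering inside rules~2--4 (an uncrowded $1$-cluster takes from a $4^+$-cluster if one is nearby, else a closed $3$-cluster, else an open center, with further tie-breaking toward crowded open $3$-clusters and $6$-cycle partners); without that ordering the $52/29$ bound on what an open $3$-cluster gives away would not hold.
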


\begin{proof}
Our proof is by discharging.
We assign to each vertex $v$ in the identifying code $\code$ a charge of 1.  We will redistribute the charges, without introducing any new charge, so that every vertex (whether in $\code$ or not) has charge at least 12/29.

The outline of the proof is as follows.  Consider a vertex $v$ not in $\code$.  Let $k$ be the number of neighbors of $v$ that are in $\code$.  Vertex $v$ will receive charge $12/(29k)$ from each of its $k$ neighbors (this is rule 1, in the discharging rules below).  Thus every vertex not in $\code$ receives charge $12/29$.  Clearly every neighbor of a 1-cluster $v$ must have at least two neighbors in $\code$; thus, each neighbor of $v$ will receive charge at most $6/29$ from $v$.  
If $v$ is uncrowded, then $v$ sends charge $6/29$ to each of its three neighbors, so $v$ will be left with charge $1-3(6/29) = 11/29$. Hence $v$ needs more charge. 
Our plan is to send charge 1/29 to each uncrowded 1-cluster $v$ from a nearby $3^+$-cluster $C$; we will do this via rules 2--4 below.
We will also need to prove that such a $3^+$-cluster $C$ does not send charge to too many uncrowded 1-clusters.

In verifying that each vertex finishes with charge at least $12/29$ it is convenient to count the charges of vertices in a single cluster together; i.e., for each cluster with $m$ vertices, we simply verify that the sum of the final charges of the vertices in that cluster is at least $12m/29$.
\bigskip

Note that the charge that a closed 3-cluster $C$ gives away by rule 1 is at most $3(6/29) + 2(12/29) = 42/29$.  Since $C$ begins with charge $87/29$ and needs to keep charge $36/29$, $C$ can afford to give away another $87/29 - 42/29-36/29 = 9/29$ to nearby clusters.  In contrast, the charge that an open 3-cluster $C'$ gives away by rule 1 may perhaps be as much as $2(6/29) + 3(12/29) = 48/29$.  Thus, $C'$ can only afford to give away another $3/29$ to nearby clusters.  Below, we define rules 2--4 so that each uncrowded 1-cluster receives charge $1/29$ from some nearby $3^+$-clusters.  Although rules 2--4 ensure sufficient charge for each uncrowded 1-cluster, in some cases they unfortunately require open 3-clusters to give away a total charge of $4/29$ to nearby 1-clusters.  Giving away this additional charge may result in a needy 3-cluster $C'$ with remaining charge only $35/29$ (rather than the necessary $36/29$). Thus, we add rule 5, which supplies these needy 3-clusters with the necessary charge (from nearby open $3$-clusters).
\bigskip

{\bf Discharging Rules}

\begin{enumerate}
\item Each vertex $v\notin \code$ that has $k$ neighbors in $\code$ receives charge $12/(29k)$ from each neighbor in $\code$.

\item Each uncrowded 1-cluster that is nearby a $4^+$-cluster $C$ receives charge $1/29$ from $C$.

\item Each uncrowded 1-cluster $v$ that is nearby a closed $3$-cluster $C$, and has not received charge by rule 2, receives charge $1/29$ from $C$.  

\item Each uncrowded 1-cluster $v$ that is nearby an open center in an open $3$-cluster $C$, and has not received charge by rule 2 or 3, receives charge $1/29$ from $C$.  
However, if $v$ is nearby a crowded 3-cluster $C$, then $v$ receives charge 1/29 from $C$ and receives no charge from any other cluster.
Similarly, if $v$ is not nearby a crowded 3-cluster, but lies on a 6-cycle with an open center in cluster $C$, then $v$ receives charge 1/29 from $C$ and receives no charge from any other cluster.


\item If $C_1$ is a needy, open 3-cluster 
then it has both its leaves within distance three of another open 3-cluster $C_2$; 
$C_1$ receives charge 1/29 from $C_2$.  
However, if $C_1$ and $C_2$ are both uncrowded and they each have both leaves within distance three of the other cluster, then neither cluster sends charge to the other.  We say that such $C_1$ and $C_2$ are {\it paired with each other}.
\end{enumerate}


Note that in each rule, if cluster $C_1$ receives charge from cluster $C_2$, then $C_1$ is nearby $C_2$.  This is a necessary, though not sufficient, condition for receiving charge.

Before we verify that each vertex has final charge at least $12/29$, we state five structural lemmas about the relationships between uncrowded, threatened, and needy clusters and their nearby $3^+$-clusters.  We defer the proofs of these lemmas until after we complete the discharging argument.
We separate these lemmas from the rest of the present proof because they make no mention of discharging rules.  Thus, they may be useful in proving a stronger lower bound.

\begin{lemma}
\label{lem1}
Every uncrowded 1-cluster is nearby a $3^+$-cluster.
\end{lemma}

\begin{lemma}
\label{lem2}
Every closed 3-cluster $C$ is nearby at most ten 1-clusters and open 3-clusters. If $C$ is nearby exactly ten such clusters, then $C$ is crowded.
\end{lemma}

\begin{lemma}
\label{lem5}
Every needy 3-cluster has both leaves within distance three of a $3^+$-cluster.
\end{lemma}

\begin{lemma}
Let $C_1$ and $C_2$ be uncrowded, open 3-clusters that are paired with each other.
Clusters $C_1$ and $C_2$ have at most 7 nearby threatened 1-clusters and threatened 3-clusters. If they have exactly 7 such nearby clusters, then they also have a nearby closed 3-cluster or $4^+$-cluster.
\label{lem6}
\end{lemma}

\begin{lemma}
\label{lem3}
Every cluster $C$ with $m$ vertices has at most $m+8$ nearby clusters.
\end{lemma}


Now we verify that after the discharging phase, every vertex (whether in $\code$ or not) has charge at least $12/29$; this proves that $\code$ has density at least $12/29$.  We write $f(v)$ or $f(C)$ to denote the charge at $v$ or $C$ after the discharging phase.
\bigskip

Suppose $v\notin\code$.  By rule 1, $f(v) = k(12/(29k)) = 12/29$, where $k$ is the number of neighbors of $v$ in $\code$.
Now let $v$ be a crowded 1-cluster.  One of the neighbors $u$ of $v$ has three neighbors in $\code$, so $u$ receives only charge $4/29$ from $v$.  Hence $f(v)\ge 1 - 2(6/29) - (4/29) = 13/29$.
Finally, let $v$ be an uncrowded 1-cluster.  By Lemma~\ref{lem1} and rules 2--4, $v$ receives charge $1/29$ from some nearby $3^+$-cluster.  Thus, $f(v)\ge 1 - 3(6/29) + 1/29 = 12/29$.
\bigskip

Let $C$ be a closed 3-cluster; let $u$ and $v$ be the leaves of $C$, and let $w$ be the center.  
Since we can distinguish between the neighbors of $u$ (not in $C$), at least one of them has a neighbor in $\code$ other than $u$; similarly for the neighbors of $v$.
Since $C$ is closed, the neighbor of $w$ not in $C$ has another neighbor in $\code$.  Thus, the charge given from $C$ to adjacent vertices is at most $2(12/29) + 3(6/29) = 42/29$.  
If $C$ is uncrowded, then, by Lemma~\ref{lem2}, $C$ gives charge $1/29$ to at most nine nearby clusters; thus $f(C) \ge 3 - 42/29 - 9(1/29) = 36/29$.  If $C$ is crowded, then the charge $C$ gives to adjacent vertices is at most $\max(2(6/29)+2(12/29)+4/29,4(6/29)+12/29)=40/29$.  By Lemma~\ref{lem2}, $C$ gives charge to at most ten nearby clusters, so $f(C)\ge 3 - 40/29 - 10(1/29) > 3(12/29)$.
\bigskip

Now we consider open 3-clusters.
In the remainder of this paper, we often seek to show that an open 3-cluster is not needy, thus we now study how much charge is given away by an open 3-cluster.  
\begin{claim}
Every open 3-cluster gives away charge at most 52/29.
\label{cla1}
\end{claim}

\begin{claim}
If an open 3-cluster $C$ has a vertex $v$ at distance two and $v$ does not receive charge from $C$, then $C$ gives away charge at most 51/29.
Similarly, if an open 3-cluster $C$ has a nearby closed 3-cluster or $4^+$-cluster, then $C$ gives away charge at most $51/29$ and $C$ is not needy.
\label{cla2}
\end{claim}


Since Claims~\ref{cla1} and \ref{cla2} are very similar, we only provide a proof for Claim~\ref{cla1}.  However, a short analysis of this proof yields a proof of Claim~\ref{cla2}.

\begin{figure}[htb]
\begin{center}
\begin{tikzpicture}[scale=.60]
\useasboundingbox (0,2.25) rectangle (11.0,10.1);
\grid{6}{8.7}{7};
\begin{scope}[xshift = \root cm, yshift = 3 cm, x=.866 cm, line width=.1cm, black]
\draw (3,2.5) \dot -- (4,2) \dot -- (5,2.5) \dot
(0,2) -- (0,1) -- (1,.5) (1,2.5) \dot (2,1) \dot (2,4) \dot (4,4) \dot (6,4) \dot (7,2.5) \dot (6,1) \dot (7,.5) -- (8,1) -- (8,2) 
(3,6.5) node[above] {1}
(5,6.5) node[above] {2}
(2,5) node[above] {3}
(3,5.5) node[below] {4}
(4,5) node[above] {5}
(5,5.5) node[below] {6}
(6,5) node[above] {7}
(2,4) node[below] {8}
(3,3.5) node[above] {9}
(4,4) node[below] {10}
(5,3.5) node[above] {11}
(6,4) node[below] {12}
(0,2) node[above] {13}
(1,2.5) node[below] {14}
(2,2) node[above] {15}
(3,2.5) node[below] {16}
(4,2) node[above] {17}
(5,2.5) node[below] {18}
(6,2) node[above] {19}
(7,2.5) node[below] {20}
(8,2) node[above] {21}
(0,1) node[below] {22}
(1,0.5) node[above] {23}
(2,1) node[below] {24}
(3,0.5) node[above] {25}
(4,1) node[below] {26}
(5,0.5) node[above] {27}
(6,1) node[below] {28}
(7,0.5) node[above] {29}
(8,1) node[below] {30}
(3,-.5) node[below] {31}
(4,-1) node[above] {32}
(5,-.5) node[below] {33};
\draw (3,-.5) \dot -- (4,-1) -- (5,-.5) \dot;
\draw (4.4,1.7) node[]{$C$};
\draw (4,-1.9) node[]{Fig. 1: Proof of Claim~\ref{cla1}.};
\end{scope}

\end{tikzpicture}
\end{center}
\end{figure}

\begin{proof}[of Claim~\ref{cla1}]
Let $C$ be the open 3-cluster 16-17-18 shown in Fig.~1.  
Let $U=\{8,9,10,14,15,24\}$; these are the vertices that may receive charge from $C$ and also are nearest to 16.  Similarly, let $V=\{10,11,12,19,20,28\}$ and let $W=\{26,31,33\}$.
Let $U'=U\cup\{$13-22-23$\}$, $V'=V\cup\{$21-30-29$\}$, and $W'=W\cup\{$31-32-33$\}$.

We now show that the charge $C$ sends to vertices in $U'$ is at most $20/29$ if $C$ sends charge to the 3-cluster 13-22-23\ in $U'$ and at most $19/29$ otherwise; and the same is true for $V'$ and the 3-cluster 21-30-29.  We also show that the charge $C$ sends to vertices in $W'$ is $14/29$ if $C$ sends charge to both 31 and 33, and is at most $13/29$ otherwise.

\def\u2{U\cap{\code}}
Since we can distinguish between 9 and 15, we know that $|\u2|\ge 1$.  If $|\u2|=1$, then the charge $C$ sends to $U'$ is $12/29 + 6/29+1/29 +1/29 = 20/29$ if $C$ sends charge to the 3-cluster, and $19/29$ otherwise.  If $|\u2|=2$ then either each of 9 and 15 have two neighbors in $\code$ or one of them has three neighbors and the other has only 16.  Suppose that 9 has three neighbors in $\code$; note that since 9 receives charge $4/29$ from each of its neighbors, none of these neighbors can be a threatened 1-cluster.  Thus the charge $C$ sends to $U'$ is at most $\max(2(6/29) + 2(1/29), 12/29+4/29 + 1/29)= 17/29$.  If $|\u2|=3$, then the charge $C$ sends to $U'$ is at most $6/29+4/29+1(1/29)=11/29$.   If $|\u2|=4$, then the charge $C$ sends to $U'$ is $2(4/29)+0(1/29)=8/29$.  The same analysis holds for the charge $C$ sends to $V'$.  If $|W\cap {\code}|=2$, then the charge $C$ sends to $W'$ is at most $12/29+2(1/29) = 14/29$. 
If either of 31 and 33 is not a threatened 1-cluster, then the charge $C$ sends is at most $13/29$.
Now we examine the total charge given away by $C$.

First we consider the case when both 31 and 33 are threatened 1-clusters.  Now $24\in\code$ and $28\in\code$.  Thus, $C$ does not send charge to the 3-cluster in either $U'$ or $V'$.  Hence, if $C$ is unpaired, then $C$ sends total charge at most $14/29+2(19/29)=52/29$.

Now we consider the case when at least one of 31 and 33 is not a threatened 1-cluster.  Now $C$ sends total charge at most $13/29 + 2(20/29) = 53/29$.  However, equality holds only if $10\notin\code$ and $8\in\code$ and $12\in\code$; suppose equality holds. We consider $N[10]$.  Since $9\notin\code$, $10\notin\code$, and $11\notin\code$, we must have $5\in\code$, and specifically, 5 must be in a $3^+$-cluster $C_1$.  If $C_1$ is a $4^+$-cluster, then $C_1$ gives charge to both 8 and 12, so $C$ doesn't have to.  Similary, if $C_1$ is 1-4-5 or 5-6-2, then $C_1$ is a closed 3-cluster; again $C_1$ gives charge to 8 and 12, so $C$ doesn't have to.  Finally, suppose that $C_1$ is 4-5-6.  Now $C_1$ is also an open 3-cluster.  
By the final sentence of rule 4, vertices 8 and 12 each receive charge 1/29 from $C_1$ and each receive no charge from $C$.
Thus, again $C$ gives away total charge at most 52/29.  
\end{proof}

We just proved that every open 3-cluster $C$ gives away charge at most 52/29.  Now Lemma~\ref{lem5} states that every needy 3-cluster has both leaves within distance three of a $3^+$-cluster.  So by rule 5, every unpaired needy 3-cluster receives charge $1/29$.  Thus, for every unpaired needy 3-cluster $C$,
we have $f(C)\ge 3 - 52/29 + 1/29 = 36/29$.  If an unpaired open 3-cluster is not needy, then $f(C)\ge 3 - 51/29 = 36/29$.
Hence, we now turn our attention to paired open 3-clusters.
\bigskip

Lemma~\ref{lem6} reads: ``Let $C_1$ and $C_2$ be uncrowded, open 3-clusters that are paired with each other.  Either $C_1$ and $C_2$ have at most 6 nearby threatened 1-clusters and threatened 3-clusters, or they have exactly 7 such nearby clusters, but they also have a nearby closed 3-cluster or $4^+$-cluster.''
So let $C_1$ and $C_2$ be open, uncrowded 3-clusters that are paired with each other.  We consider the two cases listed in Lemma~\ref{lem6}.  

Note that the charge that each gives to adjacent vertices not in $\code$ is $3(12/29)+2(6/29) = 48/29$.  Thus, if $C_1$ and $C_2$ have at most 6 nearby threatened 1-clusters and threatened 3-clusters, then $f(C_1)+f(C_2)\ge 6 - 2(48/29) - 6(1/29) = 72/29$.  Similarly, if $C_1$ and $C_2$ have exactly 7 nearby threatened 1-clusters and threatened 3-clusters, then $f(C_1)+f(C_2)\ge  6 - 2(48/29) - 7(1/29) + 1/29 = 72/29$. 
\bigskip

Finally, we consider $4^+$-clusters.  Let $C$ be a $4^+$-cluster with $m$ vertices.
We will show that $f(C)\ge 12m/29$.  
Note that for each $v\in C$ if $d_C(v)=1$, then $v$ gives charge at most $12/29+6/29=18/29$ to adjacent vertices not in $C$.  Similarly, if $d_C(v)=2$, then $v$ gives charge at most $12/29$ to its adjacent vertex not in $C$; if $d_C(v)=3$, then $v$ has no adjacent vertices not in $C$.  Let $\alpha_i$ denote the number of vertices $v$ in $C$ with $d_C(v)=i$ for $i=1,2,3$.   Lemma~\ref{lem3} implies that $C$ gives charge at most $(m+8)/29$ to nearby clusters.  Thus, $f(C)\ge \alpha_1 + \alpha_2 + \alpha_3 - \frac{18}{29}\alpha_1 - \frac{12}{29}\alpha_2 - \frac{1}{29}(\alpha_1+\alpha_2+\alpha_3+8)$.   We want $f(C)-12m/29\ge 0$; thus, we want to show that $(-2\alpha_1 + 4\alpha_2 + 16\alpha_3-8)/29\ge 0$.
 Note that if $\alpha_3>0$, then $\alpha_1\le \alpha_3+2$, and if $\alpha_3=0$, then $\alpha_1\le 2$. So the desired inequality holds except when $\alpha_3=0$, $\alpha_1=2$, and $\alpha_2=2$.
Now we consider the two 4-clusters with $\alpha_3=0$, $\alpha_1=2$, and $\alpha_2=2$.
%

\begin{figure}[htb]
\begin{center}
\begin{tikzpicture} [scale = .60]
\useasboundingbox (9,0.8) rectangle (12.5,10);
\begin{scope}[xshift = 0 cm]
\grid{6}{7}{6.5};
\begin{scope}[xshift = \root cm, yshift = 3 cm, x=.866 cm]
\draw[fill=black] 
(5,6.5) node[above]{1} 
(2,5) node[above]{2}
(3,5.5) node[below]{3}
(4,5) node[above]{4}
(5,5.5) node[below]{5}
(0,4) node[below]{6}
(1,3.5) node[above]{7}
(2,4) node[below]{8}
(3,3.5) node[above]{9}
(4,4) node[below]{10}
(5,3.5) node[above]{11}
(6,4) node[below]{12}
(7,3.5) node[above]{13}
(-1,2.5) node[below]{14}
(0,2) node[above]{15}
(1,2.5) node[below]{16}
(2,2) node[above]{17}
(3,2.5) node[below]{18}
(4,2) node[above]{19}
(5,2.5) node[below]{20}
(6,2) node[above]{21}
(7,2.5) node[below]{22}
(0,1) node[below]{23} 
(1,0.5) node[above]{24}
(2,1) node[below]{25}
(3,0.5) node[above]{26}
(4,1) node[below]{27}
(5,0.5) node[above]{28}
(6,1) node[below]{29}
(7,0.5) node[above]{30}
(1,-.5) node[below]{31}
(2,-1) node[above]{32}
(3,-.5) node[below]{33}
(4,-1) node[above]{34}
(5,-.5) node[below]{35}
(6,-1) node[above]{36};
\filldraw[line width=.1cm, black] (2,2)\dot -- (3,2.5) \dot -- (4,2) \dot -- (4,1) \dot 
(0,1) -- (1,.5)
(1,-.5) -- (2,-1) 
(3,-.5) -- (4,-1)
(0,2)  -- (-1, 2.5)
(0,4) -- (1,3.5)
(2,4) -- (2,5) 
(4,4) -- (4,5)
(5,3.5) -- (6,4) 
(7,2.5) -- (6,2)
(7,0.5) -- (6,1)
(6,-1) -- (5,-.5);
\draw (3.8,-2.5) node[text width=6cm]{Fig. 2a: First 4-cluster with $\alpha_3=0$, $\alpha_1=2$, and $\alpha_2=2$.};
\end{scope}
\end{scope}
\begin{scope}[xshift = 11.0 cm]
\grid{6}{8.7}{7};
\begin{scope}[xshift = \root cm, yshift = 3 cm, x=.866 cm]
\draw[fill=black] 
(2,5) node[above]{1}
(3,5.5) node[below]{2}
(4,5) node[above]{3}
(5,5.5) node[below]{4}
(6,5) node[above]{5}
(0,4) node[below]{6}
(1,3.5) node[above]{7}
(2,4) node[below]{8}
(3,3.5) node[above]{9}
(4,4) node[below]{10}
(5,3.5) node[above]{11}
(6,4) node[below]{12}
(7,3.5) node[above]{13}
(-1,2.5) node[below]{14}
(0,2) node[above]{15}
(1,2.5) node[below]{16}
(2,2) node[above]{17}
(3,2.5) node[below]{18}
(4,2) node[above]{19}
(5.05,2.5) node[below]{20}
(6,2) node[above]{21}
(7,2.5) node[below]{22}
(8,2) node[above]{23}
(0,1) node[below]{24} 
(1,0.5) node[above]{25}
(2,1) node[below]{26}
(3,0.5) node[above]{27}
(4,1) node[below]{28}
(5,0.5) node[above]{29}
(6,1) node[below]{30}
(7,0.5) node[above]{31}
(1,-.5) node[below]{32}
(2,-1) node[above]{33}
(3,-.5) node[below]{34}
(4,-1) node[above]{35}
(5,-.5) node[below]{36};
\filldraw[line width=.1cm, black]
(2,2) \dot -- (3,2.5) \dot -- (4,2)  \dot -- (5,2.5) \dot
(2,5) -- (2,4) (4,5) -- (4,4) (6,5) -- (6,4) (7,3.5) \dot
(0,4) -- (1,3.5) (-1,2.5) -- (0,2) (0,1) \dot
(1,.5) -- (1,-.5) (3,.5) -- (3, -.5) (5,.5) -- (5, -.5)
(6,1) -- (7,.5) (7,2.5) -- (8,2);
\draw (4.35,1.7) node[]{$C$};
\draw (4.4,-2.5) node[text width=6cm]{Fig. 2b: Second 4-cluster with $\alpha_3=0$, $\alpha_1=2$, and $\alpha_2=2$.};
\end{scope}
\end{scope}
\end{tikzpicture}
\end{center}
\end{figure}
\end{proof}

We begin with the 4-cluster shown in Fig.~2a.  The charge that $C$ gives to its six adjacent vertices not in $\code$ is at most $4(12/29)+2(6/29)=60/29$.  Since $C$ begins with charge 4 and must retain charge at least $4(12/29)$, the charge that $C$ can afford to give to nearby needy clusters is $4 - 60/29 - 4(12/29) = 8/29$.  Note that $C$ has at most 11 nearby needy clusters, since all the vertices at distance 2 and 3 are covered by the 11 sets $\{2,8\}$, $\{4,10\}$, $\{6,7\}$, $\{11,12\}$, $\{14,15\}$, $\{21,22\}$, $\{23,24\}$, $\{29,30\}$, $\{31,32\}$, $\{33,34\}$, and $\{35,36\}$.

If $8\in\code$, $10\in\code$, $11\in\code$, or $21\in\code$, then the charge $C$ gives to adjacent vertices is at most $54/29$, and $C$ can give charge $1/29$ to each of the at most 11 nearby needy clusters.  Hence, we assume that $8\notin\code$, $10\notin\code$, $11\notin\code$, and $21\notin\code$.  Under this assumption, we will show that $C$ has at most 8 nearby needy clusters.  Note, as follows, that the sets $\{2,8\}$ and $\{4,10\}$ intersect at most 1 needy cluster.  Since $9\notin\code$, $10\notin\code$, and $11\notin\code$, we must have 4\ in a $3^+$-cluster.  Since $8\notin\code$, we only need consider $2\in\code$.  Now either 2 and 4 are in the same cluster, or 4 is in a $4^+$-cluster or closed 3-cluster (since $12\in\code$).  By symmetry, the sets $\{11,12\}$ and $\{21,22\}$ intersect at most 1 needy cluster.
Thus, we see that $C$ has at most 9 nearby needy clusters.  
We now show that, in fact, $C$ has at most 8 nearby needy clusters.  

Suppose to the contrary that each of the seven sets $\{6,7\}$, $\{14,15\}$, $\{23,24\}$, $\{29,30\}$, $\{31,32\}$, $\{33,34\}$, and $\{35,36\}$ intersects a needy cluster.  Recall from Claim~2 that if a $3^+$-cluster $C_2$ has a vertex $v$ at distance two and $v$ is within distance three of a $4^+$-cluster, then $C_2$ is not needy.  Hence, we conclude that the cluster that intersects $\{31,32\}$ is a 1-cluster; by symmetry, we assume that this cluster is 31.  For the same reason, we must now have the clusters 23, 14, and 7.  Since 7 is a 1-cluster, we must also have $2\in\code$. Now if 2 and 4 lie in the same cluster, then that cluster is not needy, so the lemma is true.  Similarly, the lemma is true if 4 and 12 lie in the same cluster.  Hence, 4 must lie in a cluster with 5, but not with 12.  However, as before, the cluster containing 4 and 5 cannot be needy, since $12\in\code$.  Thus, the lemma is true for the first 4-cluster with $\alpha_3=0$, $\alpha_1=2$, and $\alpha_2=2$.

We now consider the 4-cluster $C$ shown in Fig.~2b.  Note that $C$ has at most 12 nearby clusters, since each of the 12 sets $\{1,8\}$, $\{3,10\}$, $\{5,12\}$, $\{6,7\}$, $\{13\}$, $\{14,15\}$, $\{22,23\}$, $\{24\}$, $\{25,32\}$, $\{27,34\}$, $\{29,36\}$ and $\{30,31\}$ intersects at most one cluster, and these 12 sets cover all the vertices at distance two or three from $C$.  As for the previous case, if any of $8\in\code$, $10\in\code$, $27\in\code$, or $29\in\code$ hold, then $C$ gives away charge at most $54/29$ to adjacent vertices, so $C$ can afford to give away charge $1/29$ to each of the at most 12 nearby clusters.  Thus, we assume that $8\notin\code$, $10\notin\code$, $27\notin\code$, and $29\notin\code$.

We will show that the four sets $\{1,8\}$, $\{3,10\}$, $\{5,12\}$, and $\{13\}$ intersect at most two needy clusters.  Since $N[10]\cap\code\ne\emptyset$, we know that $3$ is in a $3^+$-cluster $C_1$; we consider two (non-exclusive) cases: $2\in C_1$ and $4\in C_1$.  First suppose that $2\in C_1$. Now the two sets $\{1,8\}$ and $\{3,10\}$ intersect at most one cluster.  Further, if $5\in\code$, then we can show that $C_1$ is not needy, since 5 is nearby $C_1$ but the cluster contain 5 receives no charge from $C_1$, since it is also nearby $C$.  However, if $5\notin\code$, then the four sets $\{1,8\}$, $\{3,10\}$, $\{5,12\}$, and $\{13\}$ intersect at most two clusters. 
Suppose instead that $4\in C_1$.  If $1\in\code$, then we can show that $C_1$ is not needy.  However, if $1\notin\code$, then the four sets $\{1,8\}$, $\{3,10\}$, $\{5,12\}$, and $\{13\}$ intersect at most two clusters. 

Thus, the four sets $\{1,8\}$, $\{3,10\}$, $\{5,12\}$, and $\{13\}$ intersect at most two needy clusters.  By symmetry,  the four sets $\{24\}$, $\{25,32\}$, $\{27,34\}$, and $\{29,36\}$ intersect at most two needy clusters.   Thus, $C$ has at most 8 nearby, needy clusters, so $f(C)\ge 4 - 60/29 - 8(1/29) = 48/29$.
This concludes the proof of Theorem~\ref{mainthm}, subject to proving Lemmas~\ref{lem1}--\ref{lem3}, which we do in the next section.  

\section{Structural Lemmas}
\setcounter{lemma}{0}

\begin{figure}[htb]
\begin{center}
\begin{tikzpicture} [scale = .60]
\useasboundingbox (9,2.2) rectangle (10,10);
\grid{6}{7.0}{5.2};
\begin{scope}[xshift = \root cm, yshift = 3 cm,x=.866 cm]
\draw[fill=black,line width=.1cm]
(2,5) node[above]{1} 
(4,5) node[above]{2} 
(2,4) node[below]{3} 
(3,3.5) \dot node[above]{4} 
(4,4) node[below]{5} 
(1,2.5) \dot node[below]{6} 
(2,2) node[above]{7} 
(3,2.5) node[below]{8} 
(4,2) node[above]{9} 
(5,2.5) \dot node[below]{10}
(2,1) \dot node[below]{11} 
(3,0.5) node[above]{12}
(4,1) \dot node[below]{13};
\draw (3,-1.9) node[]{Fig. 3a: Proof of Lemma~\ref{lem1}.};
\end{scope}
\begin{scope}[xshift = 9.5 cm]
\grid{6}{8.7}{7};
\begin{scope}[xshift = \root cm, yshift = 3 cm, x=.866 cm]
\draw[fill=black] 
(5,6.5) node[above]{1} 
(2,5) node[above]{2}
(3,5.5) node[below]{3}
(4,5) node[above]{4}
(5,5.5) node[below]{5}
(6,5) node[above]{6}
(7,5.5) node[below]{7}
(1,3.5) node[above]{8}
(2,4) node[below]{9}
(3,3.5) node[above]{10}
(4,4) node[below]{11}
(5,3.5) node[above]{12}
(6,4) node[below]{13}
(7,3.5) node[above]{14}
(0,2) node[above]{15}
(1,2.5) node[below]{16}
(2,2) node[above]{17}
(3,2.5) node[below]{18}
(4,2) node[above]{19}
(5,2.5) node[below]{20}
(6,2) node[above]{21}
(7,2.5) node[below]{22}
(8,2) node[above]{23}
(1,0.5) node[above]{24}
(2,1) node[below]{25}
(3,0.5) node[above]{26}
(4,1) node[below]{27}
(5,0.5) node[above]{28}
(6,1) node[below]{29}
(7,0.5) node[above]{30}
(3,-.5) node[below]{31}
(5,-.5) node[below]{32};
\filldraw[line width=.1cm, black] (3,2.5) \dot -- (4,2) \dot -- (5,2.5) \dot
(3,.5) -- (3,-.5)
(1,.5) -- (2, 1)
(0,2)  -- (1, 2.5)
(1,3.5) \dot 
(2,4) -- (2,5) 
(4,4) -- (4,5)
(6,5) -- (6,4) 
(7,3.5) \dot
(7,2.5) -- (8,2)
(7,0.5) -- (6,1)
(5,0.5) -- (5,-.5);
\draw (4.35,1.7) node[]{$C$};
\draw (4,-1.9) node[]{Fig. 3b: Proof of Lemma~\ref{lem2}.};
\end{scope}
\end{scope}
\end{tikzpicture}
\end{center}
\end{figure}

\begin{lemma}
Every uncrowded 1-cluster is nearby a $3^+$-cluster.
\end{lemma}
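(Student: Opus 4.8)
The plan is to peel off the shells of vertices around the $1$-cluster and then use the local geometry of the honeycomb to eliminate the remaining possibility; I argue by contradiction. So suppose $v$ is an uncrowded $1$-cluster that is not nearby any $3^+$-cluster, and let $v_1,v_2,v_3$ be its neighbours, all outside $\code$. Since $v$ and each $v_i$ must be distinguishable and $N[v]\cap\code=\{v\}$, each $v_i$ has a code neighbour besides $v$; since $v$ is uncrowded, $v_i$ has no third code neighbour, so this extra code neighbour $w_i$ is unique. Because the hexagonal grid has girth $6$, the vertices $w_1,w_2,w_3$ are distinct and each is at distance exactly $2$ from $v$. Each $w_i$ must be a $1$-cluster: a $4^+$-cluster or a closed $3$-cluster containing $w_i$ would lie within distance $3$ of $v$ and hence be nearby $v$; an open $3$-cluster containing $w_i$ has its centre equal to or adjacent to $w_i$, hence within distance $3$ of $v$, again making $v$ nearby it; and $\code$ has no $2$-cluster.

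Next I would push to the outer shells. Each $v_i$ has a third neighbour $z_i\notin\code$ at distance $2$ from $v$, and each $1$-cluster $w_i$ has two neighbours $x_i,y_i\notin\code$ besides $v_i$, at distance $3$ from $v$ (girth again rules out the coincidences that would shorten these distances). Distinguishability forces further code vertices: since $N[w_i]\cap\code=\{w_i\}$, separating $x_i$ (respectively $y_i$) from $w_i$ forces $x_i$ (respectively $y_i$) to have a second code neighbour, and since $\code$ is an identifying code each $z_i$ also has a code neighbour. All of these forced code vertices lie at distance $3$ from $v$, so by our assumption each of them is either a $1$-cluster or a leaf of an open $3$-cluster whose centre is at distance $4$.

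To finish, I would combine the above with $3$-regularity and the girth to pin the picture down to (essentially) the configuration in Figure~3a, and then run a case analysis on which of the shell-$3$ vertices coincide and on which of the forced vertices actually lie in $\code$. In every case the goal is to exhibit a vertex at distance at most $3$ from $v$ that either fails to be distinguished from one of its neighbours or has empty intersection with $\code$; this contradicts that $\code$ is an identifying code and proves the lemma.

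I expect this last step to be the main obstacle. The reductions in the preceding paragraphs use only that the grid is $3$-regular with girth $6$, but eliminating the remaining configuration genuinely uses the planar structure of the honeycomb. Two points require care: the coincidences among the shell-$3$ vertices (for instance $x_i=x_j$ or $z_i=x_j$), which split the analysis into subcases; and the distinction between a $3$-cluster lying within distance $3$ of $v$ and its \emph{centre} lying within distance $3$ of $v$ --- since an open $3$-cluster makes $v$ nearby it only when the open centre is within distance $3$, a badly placed open $3$-cluster at distance $3$ from $v$ is a genuine case one must handle.
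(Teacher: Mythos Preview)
Your proposal is a sketch, not a proof: you explicitly defer the decisive step (``I would run a case analysis\ldots I expect this last step to be the main obstacle'') and never carry it out. What you have written up to that point is largely correct, but there is one genuine slip. You claim that the code vertices forced by distinguishing $x_i$ (and $y_i$) from $w_i$ ``lie at distance $3$ from $v$.'' They do not: the hexagonal grid is bipartite, $x_i$ is at distance $3$, and hence every neighbour of $x_i$ lies at distance $2$ or $4$ from $v$. So the second code neighbour of $x_i$ is at distance $2$ or $4$, and your conclusion that it must be a $1$-cluster or a leaf of a far-away open $3$-cluster does not follow from what you wrote.

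More importantly, the case analysis you are bracing for is unnecessary. The paper's proof is about eight lines. The key economy is a symmetry reduction you never make: by the $3$-fold rotational symmetry of the hex grid about $v$, one may assume that two of your $w_i$'s are a \emph{specific} adjacent pair of distance-$2$ vertices (the paper's $10$ and $11$). From there a single chain of forced deductions suffices: uncrowdedness gives $8\notin\code$; distinguishing $7$ from $11$ gives $6\in\code$; nonemptiness of $N[8]\cap\code$ gives $4\in\code$; distinguishing $4$ from $8$ forces $4$ into a $3^+$-cluster. That cluster is automatically nearby: if it is a $4^+$- or closed $3$-cluster we are done, and otherwise it is exactly $3$-$4$-$5$ with open centre $4$ at distance $3$ from $v$. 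No branching, no shell-$3$ coincidence bookkeeping, and no contradiction is sought --- the argument is constructive, exhibiting the nearby cluster directly rather than hunting for a distinguishability failure.
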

\vspace{-.15in}
\begin{proof}
Let $13\in\code$ be the uncrowded 1-cluster shown in Fig.~3a.  Since we can distinguish $13$ from its neighbors, each of these neighbors has an additional neighbor in $\code$.  By symmetry, we may assume that $10,11\in \code$.  If 10 or 11 is in a $3^+$-cluster $C$, then the lemma holds; hence we may assume that 10 and 11 are 1-clusters.  Since 13 is uncrowded, and $10,13\in\code$, we know that $8\notin\code$.  Since we can distinguish 7 from 11, we know that $6\in\code$.  Since $N[8]\cap\code$ is nonempty, we know that $4\in\code$.  Since we can distinguish 4 from 8, we know that 4 is in a $3^+$-cluster.  If 4 is in a $4^+$-cluster or closed 3-cluster $C$, then the lemma holds.  Hence, $C=$ 3-4-5.  Now 4 is an open center, so again the lemma holds.
\end{proof}

\begin{lemma}
Every closed 3-cluster $C$ has at most ten nearby 1-clusters and open 3-clusters.  If $C$ has exactly ten such clusters, then $C$ is crowded.
\end{lemma}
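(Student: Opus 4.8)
The plan is to fix a closed 3-cluster $C$ explicitly in the hexagonal grid (as in Fig.~3b, with $C$ the path $18$--$19$--$20$) and to enumerate the vertices that could lie within distance three of $C$ and host a nearby 1-cluster or open 3-cluster. First I would note that "nearby" for an uncrowded 1-cluster or uncrowded open 3-cluster requires being within distance three of $C$; so every cluster counted toward the bound contains a vertex in the distance-$\le 3$ ball around $C$. I would then partition (or cover) the vertices at distance two and three from $C$ into a small collection of sets, each of which can meet at most one cluster, and argue that the total number of such sets that can simultaneously meet a nearby 1-cluster or open 3-cluster is at most ten.

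The key structural input comes from the definitions of "closed" and "crowded" for $C$. Since $C$ is closed, the neighbor of the center $19$ that is not in $C$ (vertex $9$ in the figure) has another neighbor in $\code$; I would use this to pin down where a code vertex must appear "above" $C$, which simultaneously forces some of the distance-two/three vertices of $C$ to be occupied and thereby reduces the number of available slots. Likewise, because each leaf of $C$ must have a code vertex at distance two (else its two outside neighbors are indistinguishable), code vertices are forced near each leaf; if any such forced configuration produces a \emph{second} code vertex at distance two from some vertex of $C$, then $C$ is crowded by definition, and that is exactly the escape hatch for the "exactly ten" case. So the proof is a careful case analysis on which of the critical nearby vertices are in $\code$: in the generic case the forced code vertices "block" enough slots to bring the count down to nine, and the only way to reach ten is for one of these forced vertices to create the extra distance-two witness that makes $C$ crowded.

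Concretely, the steps I would carry out are: (1) name the vertices of the distance-$\le3$ ball around $C$ as in Fig.~3b; (2) observe that any nearby 1-cluster or open 3-cluster occupies at least one labeled vertex, and list a covering family of small sets (pairs and singletons) so that each set meets at most one cluster — this gives a crude upper bound, probably around twelve; (3) use closedness at the center to force a code vertex above $C$ and kill one or two slots; (4) use the distinguishability requirement at each leaf to force code vertices near the leaves, killing further slots, while tracking whether a second distance-two code vertex appears (which would make $C$ crowded); (5) conclude that without crowding the count is at most nine, and that ten is attained only when $C$ is crowded. The main obstacle is step (4): the forced code vertices near the two leaves interact, and one must handle all the sub-cases of where those vertices sit — including whether they themselves belong to $C$-adjacent $3^+$-clusters versus isolated 1-clusters — without missing a configuration that would push the count to ten while leaving $C$ uncrowded. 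I expect this to require a somewhat tedious but routine diagram-chase, organized by symmetry (the two leaves of $C$ play symmetric roles) to cut the casework roughly in half.
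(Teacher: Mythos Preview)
Your overall framework—cover the distance-two and distance-three vertices by a small family of sets each meeting at most one cluster, then use forced code vertices to whittle the count down, with crowdedness as the escape hatch in the boundary case—is exactly what the paper does. But the organization you propose puts the emphasis in the wrong place, and one key mechanism is missing from your outline.

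A labeling correction first: in Fig.~3b the center of $C$ is vertex $19$, and its non-$C$ neighbor is vertex $27$ (below $C$), not vertex $9$; closedness forces a code vertex among $\{26,28\}$, on the \emph{bottom} side.

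The paper's initial covering has eleven sets, not twelve, and the organizing dichotomy is on the side \emph{opposite} to where closedness acts: either vertex $4$ (three steps above the center) lies in a $3^+$-cluster $C_1$, or vertex $11$ is a 1-cluster. In the former case one branches on which neighbors of $4$ and $5$ lie in $C_1$ (four sub-cases); in each sub-case either $C_1$ occupies two of the eleven slots, or $C_1$ is itself forced to be a closed 3-cluster or a $4^+$-cluster and hence is \emph{not counted} among the ``nearby 1-clusters and open 3-clusters'' in the lemma's statement. This last mechanism—reducing the count because a nearby cluster is of the wrong type, rather than because a slot is empty—recurs throughout the proof and is absent from your outline. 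Closedness (together with the uncrowded hypothesis) is invoked only in one of these sub-cases and in the ``$11$ is a 1-cluster'' branch, where it pins down which of $26,28$ lies in $\code$ and simultaneously rules out all other distance-two code vertices from $C$; this lets the six lower sets be shown to meet at most five relevant clusters.

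So your steps (3)--(4) are not wrong, but the heavy lifting happens on the side of $C$ away from the closedness witness, driven by $N[11]\cap\code\ne\emptyset$ and by the \emph{type} of the cluster through vertex $4$. Your plan might well stumble onto this during the chase, but as written it mislocates where the real case explosion lies and omits the type-exclusion argument that does most of the work.
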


\begin{proof}
Let $C$ be the 3-cluster 18-19-20 shown in Fig.~3b.  
Consider the nine bold edges and two bold isolated vertices not in $C$.  These eleven sets cover all of the vertices at distance two or three from $C$.  Hence, $C$ has at most 11 distinct nearby clusters (this is a special case of Lemma~\ref{lem3}).   Note that either 4 is in a $3^+$-cluster $C_1$ or 11 is a 1-cluster.  We first consider the possibilities for $C_1$.
By symmetry, we assume that $5\in C_1$; additionally, either $1\in C_1$, $3\in C_1$, $6\in C_1$, or $11\in C_1$.

Suppose $\{4,5,11\}\subseteq C_1$.  Note that $\{6,13,14\}$ intersects at most one cluster (other than $C_1$).  Similarly, if $\{2,8,9\}$ intersects two nearby clusters, then $C_1$ is a closed 3-cluster or $4^+$-cluster.  
Hence, $C$ has at most nine nearby 1-clusters and open 3-clusters.

Suppose $\{3,4,5\}\subseteq C_1$.  Note that $\{2,8,9\}$ intersects at most one cluster other than $C_1$; similarly for $\{6,13,14\}$.
Hence, $C$ has at most nine nearby 1-clusters and open 3-clusters.

Suppose $\{4,5,6\}\subseteq C_1$.  Assume also that $11\notin C_1$, since this is an earlier case.  Clearly, $C$ has at most ten nearby clusters.  Furthermore, equality holds only if each bold set (other than $\{4,11\}$ and $\{6,13\}$) intersects a distinct nearby cluster.  Under this assumption, we note the following: $14\in\code$, $22\notin\code$, $29\in\code$ (since each leaf of $C$ has some vertex $v$ at distance two, with $v\in\code$), $28\notin\code$, $26\in\code$ (since $C$ is closed), $25\notin\code$.  Also, $8\in\code$ implies that $9\notin\code$ and $16\notin\code$.  But now 18 has no vertex $v$ at distance two, with $v\in\code$.  Hence $C$ has at most nine nearby clusters.

Suppose $\{1,4,5\}\subseteq C_1$.  Also assume that $6\notin\code$, since otherwise we are in an earlier case.  Note that $13\notin\code$, for if it is, then 14 cannot be a distinct cluster and also $C_1$ is a closed 3-cluster or $4^+$-cluster.  Since $6\notin\code$ and $13\notin\code$, we are in a similar situation to the previous case, and that argument suffices.

Hence, we conclude that 11 is a 1-cluster.  
First we prove that the five sets $\{8\}$, $\{2,9\}$, $\{4,11\}$, $\{6,13\}$, and $\{14\}$ intersect at most four nearby 1-clusters and open 3-clusters.  Second, we assume that $C$ is not crowded and show that the other six bold sets intersect at most five nearby 1-clusters and open 3-clusters.
Assume that both $8\in\code$ and $14\in\code$.  Since 4 is distinguishable from 11, either $5\in\code$ or $3\in\code$; by symmetry, assume $5\in\code$.  If $6\notin\code$, then $\{6,13\}$ intersects no cluster other than (possibly) the one that contains 14.
However, if $6\in\code$, then the cluster that contains 5 and 6 is either a closed 3-cluster or a $4^+$-cluster.  Hence, $C$ has at most 10 nearby 1-clusters and open 3-clusters.

Now assume further that $C$ is not crowded.  Since $C$ is closed, either $26\in\code$ or $28\in\code$; by symmetry, assume $26\in\code$.  Since $C$ is not crowded, we know that 
$9\notin\code$, $13\notin\code$, $16\notin\code$, $22\notin\code$, $25\notin\code$, $28\notin\code$, and $29\notin\code$.  Since $N[29]\cap\code$ is nonempty, 30 is in a $3^+$-cluster $C_1$.  Now either $23\notin\code$, 
or $23\in C_1$, 
or $32\notin\code$, 
or $C_1$ is a closed 3-cluster (due to 32) or a $4^+$-cluster.  In every case, the lemma is true.
\end{proof}

\begin{lemma}
Every needy 3-cluster has both leaves within distance three of a $3^+$-cluster.
\end{lemma}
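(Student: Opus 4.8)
The plan is to argue by contradiction from a labelled picture of the neighbourhood of the needy cluster, in the style of the proofs of Lemmas~\ref{lem1} and~\ref{lem2}. Write the needy $3$-cluster as $C = u_1\,w\,u_2$, with center $w$, leaves $u_1,u_2$, and third neighbour $x$ of $w$. Since $C$ is needy it is in particular a threatened open $3$-cluster, so at the outset we may record: $x\notin\code$ and both other neighbours of $x$ lie outside $\code$; each of $u_1,u_2$ has its two neighbours outside $C$ lying outside $\code$; no closed $3$-cluster and no $4^+$-cluster lies within distance $3$ of $C$; and no open $3$-cluster lies within distance $2$ of $C$. Finally, as observed in Section~2, each leaf $u_i$ has a vertex $z_i\in\code$ at distance exactly $2$ not in $C$ (else the two neighbours of $u_i$ outside $C$ are indistinguishable), and if $z_i$ lies in a $3^+$-cluster then $u_i$ is within distance $2$ of that cluster; so it suffices to treat leaves near which the code is ``thin''.

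Suppose the conclusion fails. Because $C$ is threatened, any $3^+$-cluster other than $C$ lying within distance $3$ of $C$ is an open $3$-cluster at distance exactly $3$ from $C$. After relabelling the leaves I may assume that $u_1$ is within distance $3$ of no $3^+$-cluster other than $C$; equivalently, every code vertex within distance $3$ of $u_1$ that is not in $C$ lies in a $1$-cluster. I will show this is incompatible with $C$ being needy, that is, with $C$ having at least four nearby threatened $1$-clusters and threatened $3$-clusters. As $C$ is an open $3$-cluster, each cluster counted here is either (a)~a threatened $1$-cluster within distance $3$ of the open center $w$, or (b)~a threatened $3$-cluster both of whose leaves lie within distance $3$ of $\{u_1,w,u_2\}$. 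For type~(b): no vertex of the cluster lies within distance $2$ of $C$ (it is an open $3$-cluster, and $C$ has none that close) and no vertex lies within distance $3$ of $u_1$ (that would place a $3^+$-cluster within distance $3$ of $u_1$), so it is an open $3$-cluster each of whose leaves lies at distance exactly $3$ from $w$ or from $u_2$ and outside $N_3[u_1]$ entirely.

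I would then finish by a case analysis on a figure showing $C$ and the relevant couple of dozen vertices of $N_3[u_1]\cup N_3[w]\cup N_3[u_2]$ together with the distance-$3$ layer on the $u_2$-side. The argument has two interacting halves. First, trace what is forced near $u_1$: distinguishing $u_1$ from each of its two neighbours forces a code vertex at distance $2$ along at least one of them; each such vertex lies in a $1$-cluster; distinguishing \emph{its} neighbours then forces still more code vertices, none of which may form a $3^+$-cluster within distance $3$ of $u_1$; this pins down most of the picture around $u_1$. Second, count how many pairwise-compatible type-(a) and type-(b) clusters can be packed near $w$ and $u_2$, given the ``no open $3$-cluster within distance $2$ of $C$'' restriction and the requirement that type-(b) clusters avoid $N_3[u_1]$. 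In every configuration one finds at most three such clusters, contradicting neediness; this proves the lemma.

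The main obstacle I expect is this last packing count on the $w$/$u_2$ side, since the threshold $4$ in the definition of ``needy'' is tight for the argument: the case analysis must be pushed far enough to exclude every four-cluster configuration, and the type-(b) clusters sitting at distance exactly $3$ from $C$ while avoiding $N_3[u_1]$ and one another require care. One preliminary remark: literally, ``the conclusion fails'' allows $u_1$ to be within distance $3$ of one $3^+$-cluster and $u_2$ within distance $3$ of a different one, with no common witness, so the relabelling step needs a short separate argument; alternatively one can observe that rule~5 in fact only consumes the weaker statement that \emph{each} leaf of a needy $3$-cluster is within distance $3$ of some $3^+$-cluster, for which the relabelling is immediate.
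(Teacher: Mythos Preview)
Your plan has a real gap, and it is exactly the one you flag at the end without resolving. The lemma, as used in rule~5, requires a \emph{single} open $3$-cluster $C_2$ with both leaves of $C_1$ within distance three of $C_2$; this is what makes $C_1$ ``nearby'' $C_2$ in the sense defined for open $3$-clusters, and it is what the charge accounting in Claim~\ref{cla1} relies on. Your ``alternatively'' escape---that rule~5 only needs each leaf to have \emph{some} $3^+$-cluster within distance three---is false: if a needy $C_1$ could receive from a $C_2$ with only one leaf of $C_1$ within distance three of $C_2$, then in the proof of Claim~\ref{cla1} the set of potential rule-5 recipients of $C$ would grow beyond the three $3$-clusters handled there, and the $52/29$ bound would no longer be justified. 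So the relabelling ``assume $u_1$ is within distance three of no $3^+$-cluster'' does not negate the actual conclusion, and your sketch proves only the weaker, insufficient statement.

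The paper avoids this difficulty by exploiting a piece of hexagonal geometry you do not use. The two leaves $u_1,u_2$ of $C$ lie on a common $6$-cycle through $w$; the vertex of that $6$-cycle opposite $w$ (call it $a$, vertex~6 in Fig.~4) is at distance two from \emph{both} leaves, and its two neighbours on the cycle are non-code neighbours of $u_1,u_2$. Hence $N[a]\cap\code\ne\emptyset$ forces either $a$'s third neighbour (vertex~1) to lie in a $3^+$-cluster---which is then automatically within distance three of both leaves, giving the single witness the lemma needs---or $a$ itself to be a $1$-cluster. Only in the second branch is any counting required, and there the symmetry between the leaves is preserved; one then shows that with $a$ a $1$-cluster, $C$ has at most three nearby threatened clusters, so $C$ is not needy. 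Your approach, by breaking the symmetry at $u_1$, both loses the single-witness conclusion and makes the packing count harder.

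Finally, even granting the weaker reading, your proposal defers the entire case analysis (``in every configuration one finds at most three such clusters'') without carrying any of it out; the paper's corresponding count is not long, but it is not content-free either, so this is a second, independent incompleteness.
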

\begin{figure}
\begin{center}
\begin{tikzpicture}[scale=.6]
\useasboundingbox (4,2.0) rectangle (6.5,8);
\grid{6}{8.7}{7};
\begin{scope}[xshift = \root cm, yshift = 3 cm, x=.866 cm]
\draw[fill=black] 
(5,6.5) node[above]{0}
(4,5) node[above]{1}
(5,5.5) node[below]{2}
(6,5) node[above]{3}
(7,5.5) node[below]{4}
(8,5) node[above]{5}
(4,4) node[below]{6}
(5,3.5) node[above]{7}
(6,4) node[below]{8}
(7,3.5) node[above]{9}
(8,4) node[below]{10}
(9,3.5) node[above]{11}
(6,2) node[above]{12}
(7,2.5) node[below]{13}
(8,2) node[above]{14}
(3,-.5) node[below]{15}
(5,-.5) node[below]{16};
\draw[line width=.1cm, black] (3,2.5) \dot -- (4,2) \dot -- (5,2.5) \dot;
\draw[line width=.1cm, black] (3,-.5) \dot -- (4,-1) \dot -- (5,-.5) \dot;
\draw[line width=.1cm, black] (0,2) \dot -- (0,1) \dot -- (1,.5) \dot;
\draw[line width=.1cm, black] (8,2) \dot -- (8,1) \dot -- (7,.5) \dot;
\draw (4.55,1.7) node[]{$C_1$};
\draw (8,.95) node[below]{$C_2$};
\draw (4,-1.9) node[]{Fig. 4: Proof of Lemma~\ref{lem5}.};
\end{scope}
\end{tikzpicture}
\end{center}
\end{figure}
\begin{proof}
Let $C_1$ be the 3-cluster shown in Fig.~4.  Since $N[6]\cap\code$ is nonempty, either 
1 is in a $3^+$-cluster, or 6 is a 1-cluster.  
In the former case, the lemma holds, so we consider only the latter case.

Suppose 6 is a 1-cluster.  
We will now show that $C_1$ is not needy; suppose, for contradiction, that $C_1$ is needy.  By definition, $C_1$ is uncrowded.  This implies that neither 15 nor 16 is a 1-cluster.  Thus, the only candidates to be nearby threatened 1-clusters and threatened 3-clusters, other than 6, are the three 3-clusters shown in bold.  Furthermore, since $C_1$ is needy, 6 and each of these 3-clusters must be threatened.
Since 6 is a 1-cluster, 1 must also have another neighbor in $\code$; by symmetry, we assume $2\in\code$.
Now we consider whether 9 is in $\code$ or not.  

Suppose that $9\in\code$.  Now 9 must be a 1-cluster, since $C_2$ is threatened.  Since 8 is distinguishable from 9, we know that $3\in\code$.  Now we have $2\in\code$ and $3\in\code$, so 2 and 3 lie together in a $3^+$-cluster $C_3$.   Note that since $6\in\code$ and $9\in\code$, $C_3$ is either a closed 3-cluster or a $4^+$-cluster.  In each case, 6 is unthreatened, so $C_1$ is not needy.

Suppose instead that $9\notin\code$.  Since $7\notin\code$, $8\notin\code$, and $9\notin\code$, we know that $3\in\code$.  Similarly, $8\notin\code$, $9\notin\code$, and $13\notin\code$, so $10\in\code$; furthermore, 10 is in a $3^+$-cluster.  Since $C_2$ is threatened, 11 cannot be in a $3^+$-cluster with 10; hence, 10 is in a cluster with 5.  Now we consider the cluster $C_3$ that contains 2 and 3.  Again, $C_3$ is either a closed 3-cluster or a $4^+$-cluster.  In each case, 6 is unthreatened, so $C$ is not needy.
\end{proof}

\begin{lemma}
Let $C_1$ and $C_2$ be uncrowded, open 3-clusters that are paired with each other.
Clusters $C_1$ and $C_2$ have at most 7 nearby threatened 1-clusters and threatened 3-clusters. If they have exactly 7 such nearby clusters, then they also have a nearby closed 3-cluster or $4^+$-cluster.
\end{lemma}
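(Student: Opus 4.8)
The plan is to mimic the covering-and-whittling arguments already used for Lemmas~\ref{lem2} and \ref{lem3}, but applied to the union of the two neighborhoods. First I would fix a concrete drawing of $C_1$ and $C_2$ in $H$. Since $C_1$ and $C_2$ are paired, both are uncrowded open $3$-clusters and each has both of its leaves within distance three of the other; up to the symmetries of $H$ this leaves only a short list of relative placements of $C_1$ and $C_2$, and I would enumerate them. In each placement the vertices lying at distance at most three from $C_1\cup C_2$ can be partitioned into ``blocks'' — single vertices, or pairs/triples of mutually close vertices — each block having the property that it can meet at most one cluster of $\code$; the number of blocks is then an upper bound on the number of nearby clusters of the pair. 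Because $C_1$ and $C_2$ sit within distance three of one another, their distance-$\le 3$ neighborhoods overlap heavily, so the block count is far below the $11+11$ one would get from Lemma~\ref{lem3} applied to each cluster separately; the first goal is to push the block count down to $9$ or $10$.

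Next I would bring in the distinguishability constraints together with the definitions of ``open'', ``uncrowded'', ``threatened'', and ``needy''. Each leaf of $C_1$ and of $C_2$ must have a code vertex at distance exactly two and not in its own cluster (otherwise the leaf's two outside neighbors are indistinguishable), and that code vertex lies in a $3^+$-cluster (there are no $2$-clusters, and a $1$-cluster there would either make the relevant $C_i$ crowded or be forced into a position already accounted for). Since each $C_i$ is open, the outside neighbor of its center also has a second code neighbor, forcing a further code vertex. For each candidate assignment of threatened clusters to blocks I would then show that the forced code vertices make two of the blocks coincide, or place a code vertex in a block that cannot house a threatened cluster (``threatened'' for a $1$-cluster forbids a nearby $4^+$-cluster and a nearby unthreatened $3$-cluster; ``threatened'' for an open $3$-cluster forbids an open $3$-cluster within distance two and a closed $3$-cluster or $4^+$-cluster within distance three), or else forces a closed $3$-cluster or $4^+$-cluster nearby the pair. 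Running this bookkeeping through every placement yields the bound $7$, and in the extremal case it exhibits the required nearby closed or $4^+$-cluster.

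The main obstacle will be the sheer size of the case analysis: the enumeration of relative positions of the two paired open $3$-clusters, and within each position the many sub-cases for which blocks actually contain code vertices and of what cluster type. Care is needed to avoid double-counting clusters nearby \emph{both} $C_1$ and $C_2$, to be sure the chosen blocks genuinely cover \emph{all} distance-$\le 3$ vertices of the union, and to use correctly the more restrictive meaning of ``nearby'' for a $3$-cluster, whose \emph{both} leaves must be within distance three of the relevant open $3$-cluster. I expect the heart of the argument to be a handful of local forcing statements of exactly the type appearing in the proofs of Lemma~\ref{lem2} and Claim~\ref{cla2} — e.g.\ ``if this distance-two code vertex lies in a $3^+$-cluster and that neighbouring vertex is also in $\code$, then the cluster is a closed $3$-cluster or a $4^+$-cluster'' — which collapse most of the branches; the surviving branches should then be finished by a direct count.
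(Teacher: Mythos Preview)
Your overall strategy matches the paper's: fix a concrete placement of the paired $3$-clusters, cover the distance-$\le 3$ vertices of $C_1\cup C_2$ by a small number of blocks each meeting at most one cluster, and then use distinguishability together with the definitions of open/uncrowded/threatened to force enough structure that at most seven of those blocks can house a threatened cluster (with a closed $3$-cluster or $4^+$-cluster appearing in the extremal case). The paper carries this out for the placement shown in Fig.~5, obtains an initial bound of $8$ candidate nearby clusters for the pair (not $9$ or $10$), and then runs a case split on which of the distance-two code vertices forced by distinguishability are present.

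There is, however, a slip in your outline that would derail the case analysis. You write ``Since each $C_i$ is open, the outside neighbor of its center also has a second code neighbor, forcing a further code vertex.'' This is exactly backwards: by definition, \emph{open} means the center's outside neighbor has \emph{no} other neighbor in $\code$; it is \emph{closed} $3$-clusters whose center-neighbor has a second code neighbor. The paper uses openness in the opposite direction --- to conclude that certain vertices are \emph{not} in $\code$ --- and this is what constrains where the forced code vertices can sit. Relatedly, your claim that the distance-two code vertex seen by a leaf of $C_i$ ``lies in a $3^+$-cluster'' is not justified: uncrowdedness only says each vertex of $C_i$ sees at most one code vertex at distance two outside $C_i$, not that that vertex belongs to a $3^+$-cluster. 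In the paper's argument those distance-two code vertices (e.g.\ $13$, $25$, $30$, $35$, $45$ in Fig.~5) are precisely the candidate threatened $1$-clusters, and the case work turns on whether each is a $1$-cluster or lies in something larger. Once you correct these two points, your plan goes through along the paper's lines.
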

\begin{figure}[htb]
\begin{center}
\begin{tikzpicture}[scale=.60]
\useasboundingbox (2,1.8) rectangle (10,10);
\grid{6}{10.7}{9};
\begin{scope}[xshift = \root cm, yshift = 3 cm, x=.866 cm]
\draw[fill=black] 
(3,6.5) node[above]{1}
(4,7) node[below]{2}
(5,6.5) node[above]{3}
(6,7) node[below]{4}
(7,6.5) node[above]{5}
(8,7) node[below]{6}
(0,5) node[above]{7}
(1,5.5) node[below]{8}
(2,5) node[above]{9}
(3,5.5) node[below]{10}
(4,5) node[above]{11}
(5,5.5) node[below]{12}
(6,5) node[above]{13}
(7,5.5) node[below]{14}
(8,5) node[above]{15}
(9,5.5) node[below]{16}
(10,5) node[above]{17}
(0,4) node[below]{18}
(1,3.5) node[above]{19}
(2,4) node[below]{20}
(3,3.5) node[above]{21}
(4,4) node[below]{22}
(5,3.5) node[above]{23}
(6.1,4) node[below]{24}
(7,3.5) node[above]{25}
(8,4) node[below]{26}
(9,3.5) node[above]{27}
(10,4) node[below]{28}
(11,3.5) node[above]{29}
(2,2) node[above]{30}
(3,2.5) node[below]{31}
(4,2) node[above]{32}
(5,2.5) node[below]{33}
(6,2) node[above]{34}
(7,2.5) node[below]{35}
(8,2) node[above]{36}
(9,2.5) node[below]{37}
(10,2) node[above]{38}
(4,1) node[below]{39}
(5,0.5) node[above]{40}
(6.1,1) node[below]{41}
(7,0.5) node[above]{42}
(8,1) node[below]{43}
(9,0.5) node[above]{44}
(3,-.5) node[below]{45}
(7,-.5) node[below]{46};
\draw[line width=.1cm, black] (3,3.5) \dot -- (4,4) \dot -- (5,3.5) \dot;
\draw[line width=.1cm, black] (4,1) \dot -- (5,.5) \dot -- (6,1) \dot;
\draw (5.0,-1.9) node[]{Fig. 5: Proof of Lemma~\ref{lem6}.};
\draw (5.5,3.1) node[]{$C_1$};
\draw (5.5,0.1) node[]{$C_2$};
\end{scope}
\end{tikzpicture}
\end{center}
\end{figure}
\begin{proof}
Let $C_1 =$ 21-22-23 and $C_2=$ 39-40-41 be the paired, uncrowded, open 3-clusters shown in Fig.~5.  
We note that the number of 1-clusters and open 3-clusters nearby $C_1$ and $C_2$ is at most 8, as follows.  
We show that $C_1$ has at most four such nearby clusters.
First suppose that at most one of 1 and 3 is a 1-cluster.  
Now at most one nearby cluster intersects each of the following four vertex sets: $\{1,2,3\}$, $\{7,8,9,18,19\}$, $\{13,14,15,25,26\}$, and $\{30\}$.  If both 1 and 3 are 1-clusters, then the same analysis holds, except now we know that $9\in\code$, so $30\notin\code$.

Since we can distinguish 24 from 33, we know that either $13\in\code$ or $25\in\code$.  Similarly, since we can distinguish 34 from 42, we know that $35\in\code$, $43\in\code$, or $46\in\code$.  Note that if $25\in\code$ and $35\in\code$, then 25 and 35 are in the same $3^+$-cluster and it
must be a closed 3-cluster or $4^+$-cluster. 
Furthermore, $C_1$ and $C_2$ have at most 7 other nearby clusters; so the lemma holds.
Thus, we have five possibilities: $\{13,35\}$, $\{13,43\}$, $\{13,46\}$, $\{25,43\}$, and $\{25,46\}$.  We consider each possibility in turn, but we defer the first possibility until the end.

Suppose $13\in\code$ and $43\in\code$.  Since $C_2$ is uncrowded, $35\notin\code$; similarly $25\notin\code$.  Thus $26\in\code$ and $36\in\code$.  Now 36 and 43 lie together in a $3^+$-cluster $C_3$.  If $C_3 =$ 36-43-44, then $C_3$ is a closed 3-cluster and again $C_1$ and $C_2$ have at most 7 other nearby clusters.  
The situation is similar if $C_3$ is a $4^+$-cluster.  Hence, we assume that $C_3 =$ 37-36-43.  Since we can distinguish between 25 and 26, we know that 26 is in a $3^+$-cluster $C_4$.  
Furthermore, $C_4$ is either a closed 3-cluster or a $4^+$-cluster; thus, both $C_3$ and the cluster containing 13 are unthreatened.

Suppose $13\in\code$ and $46\in\code$.  The analysis is similar to the previous case.  Since $C_1$ is not crowded, $25\notin\code$.  Similarly, $35\notin\code$.  Thus $26\in\code$ and $36\in\code$ and each are in $3^+$-clusters.  Since $C_2$ is not crowded, $43\not\in\code$; thus, 36 lies in a $3^+$-cluster $C_3$ with 37.  
Now $C_3$ is either a closed 3-cluster or a $4^+$-cluster; thus, the cluster containing 46 is not threatened.
Similarly, the cluster containing 26 is either a $4^+$-cluster or a closed 3-cluster, so the cluster containing 13 is not threatened.

Suppose $25\in\code$ (and either $43\in\code$ or $46\in\code$).  Since $C_1$ is open and uncrowded, $12\notin\code$ and $13\notin\code$.  Since $N[12]\cap\code\ne\emptyset$, we know $3\in\code$.  Also 14 is in a $3^+$-cluster $C_3$.  If $C_3$ is closed or a $4^+$-cluster, then the clusters containing 3 and 25 are unthreatened, so the lemma holds.  Thus, we assume that $C_3=$ 5-14-15.  
Since 3 is distinguishable from 12, we know that 3 is in a $3^+$-cluster $C_4$.  
Since $5\in\code$, $C_4$ is unthreatened; so we may assume that all other nearby clusters are threatened.  Thus, 25 is a 1-cluster.  So, $36\in\code$.  If $43\in\code$, then the cluster containing 36 and 43 is nearby $C_2$ and is a closed 3-cluster or $4^+$-cluster.  Hence, we assume that $46\in\code$.  Since 25 is threatened, 5-14-15 is uncrowded.  Thus, $27\notin\code$.  
So to distinguish between 36 and 37, we must have $38\in\code$.  Suppose $37\notin\code$.  Now 28 is in a $3^+$-cluster $C_5$.  Recall that $17\notin\code$, since 5-14-15 is uncrowded.  Thus, the cluster containing 28 must be either a closed 3-cluster or a $4^+$-cluster; in each case, 25 is unthreatened, so the lemma is true.
Thus $37\in\code$; we may assume 36-37-38 is an open 3-cluster, since otherwise 25 is unthreatened.  Now since $17\notin\code$ and $28\notin\code$, we have $29$ in a $3^+$-cluster.  However, now 36-37-38 has a $3^+$-cluster at distance two, so 25 is unthreatened.  Again the lemma holds.
\bigskip

\begin{figure}[htb]
\begin{center}
\begin{tikzpicture}[scale=.60]
\useasboundingbox (2,1.7) rectangle (10,13);
\grid{8}{10.7}{9};
\begin{scope}[xshift = \root cm, yshift = 6 cm, x=.866 cm]
\draw[line width=.1cm, black] (3,3.5) \dot -- (4,4) \dot -- (5,3.5) \dot
(6,5) \dot 
(0,4) \dot -- (0,5) \dot -- (1,5.5) \dot 
(3,6.5) \dot -- (4,7) \dot -- (5,6.5) \dot 
(4,1) \dot -- (5,.5) \dot -- (6,1) \dot 
(7,2.5) \dot 
(2,2) \dot 
(3,-.5) \dot 
(9,.5) \dot -- (9,-.5) \dot -- (8,-1) \dot 
(4,-2) \dot -- (5,-2.5) \dot -- (6,-2) \dot; 
\draw (5.3,-4.9) node[]{Fig. 6: Proof of Lemma~\ref{lem6} (continued).};
\draw[fill=black] 
(3,6.5) node[above]{1}
(4,7) node[below]{2}
(5,6.5) node[above]{3}
(6,7) node[below]{4}
(7,6.5) node[above]{5}
(8,7) node[below]{6}
(0,5) node[above]{7}
(1,5.5) node[below]{8}
(2,5) node[above]{9}
(3,5.5) node[below]{10}
(4,5) node[above]{11}
(5,5.5) node[below]{12}
(6,5) node[above]{13}
(7,5.5) node[below]{14}
(8,5) node[above]{15}
(9,5.5) node[below]{16}
(10,5) node[above]{17}
(0,4) node[below]{18}
(1,3.5) node[above]{19}
(2,4) node[below]{20}
(3,3.5) node[above]{21}
(4,4) node[below]{22}
(5,3.5) node[above]{23}
(6.1,4) node[below]{24}
(7,3.5) node[above]{25}
(8,4) node[below]{26}
(9,3.5) node[above]{27}
(10,4) node[below]{28}
(11,3.5) node[above]{29}
(2,2) node[above]{30}
(3,2.5) node[below]{31}
(4,2) node[above]{32}
(5,2.5) node[below]{33}
(6,2) node[above]{34}
(7,2.5) node[below]{35}
(8,2) node[above]{36}
(9,2.5) node[below]{37}
(10,2) node[above]{38}
(4,1) node[below]{39}
(5,0.5) node[above]{40}
(6.1,1) node[below]{41}
(7,0.5) node[above]{42}
(8,1) node[below]{43}
(9,0.5) node[above]{44}
(3,-.5) node[below]{45}
(7,-.5) node[below]{46}
(8,-1) node[above]{47}
(9,-.5) node[below]{48}
(4,-2) node[below]{49}
(5,-2.5) node[above]{50}
(6,-2) node[below]{51};
\draw (5.47,3.1) node[]{$C_1$};
\draw (5.47,0.1) node[]{$C_2$};
\end{scope}
\end{tikzpicture}
\end{center}
\end{figure}

Hence we may assume that $13\in\code$ and $35\in\code$; by symmetry, we may also assume that $30\in\code$ and $45\in\code$ (see Fig.~6).  Observe that we have eight candidates to be nearby threatened clusters; these are 13, 30, 35, 45, 18-7-8, 47-48-44, 3 or 3-2-1, and 49 or 49-50-51.  If at most six of these candidates are threatened, then the lemma holds.  Hence, by symmetry, we may assume that each of 13, 35, 18-7-8, and 3 or 3-2-1 is threatened.  Since 1 is distinguishable from 10, we know that 1 is in a $3^+$-cluster $C_3$.  If $2\notin C_3$, then $C_3$ is a $4^+$-cluster or a closed 3-cluster; so the cluster containing 3 is unthreatened.  Thus, we assume that $C_1$ is 1-2-3.  
Since 13 and 14 are distinguishable, either $5\in\code$ or $15\in\code$.  If $5\in\code$, then 1-2-3 is crowded, so neither 1-2-3 nor 13 is threatened; hence, we assume $15\in\code$.
Since 35 is distinguishable from 25, we know that $26\in\code$.  However, now the cluster containing 15 and 26 is either a closed 3-cluster or a $4^+$-cluster; in each case both 13 and 35 are unthreatened.
\end{proof}

\begin{lemma}
Every cluster $C$ with $m$ vertices has at most $m+8$ nearby clusters.
\end{lemma}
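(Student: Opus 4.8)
The plan is to reduce the statement to a purely geometric covering problem about the honeycomb lattice $H$. Two facts are immediate: distinct clusters are never adjacent (else they would form a single component of $G[\code]$), and every relation ``$C_1$ is nearby $C_2$'' defined in Section~2 forces some vertex of $C_1$ to lie within distance three of $C_2$. Together these imply that each cluster nearby $C$ contains a vertex at distance \emph{exactly} two or three from $C$; call the set of all such vertices $D(C)$. Now observe that any subset of $V(H)$ that is a single vertex or an edge meets at most one cluster (if both endpoints of an edge lie in $\code$, they lie in the same component). Hence, if $D(C)$ can be covered by at most $m+8$ vertices and edges of $H$, then sending each nearby cluster to a covering set that contains one of its distance-two-or-three vertices is an injection, and the lemma follows. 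So it suffices to prove: if $C$ is a connected subgraph of $H$ on $m$ vertices, then the set $D(C)$ of vertices at distance two or three from $C$ can be covered by at most $m+8$ singletons and edges of $H$. (For $m\le 2$ no cluster is nearby $C$ at all, so the lemma is trivial there; the real content is $m\ge 3$, but the covering statement is most naturally proved for all $m\ge 1$.)

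I would prove the covering statement by induction on $m$. For the base case $m=1$, a direct check in $H$ — using girth $6$, so that no short cycles complicate the neighbourhood count — shows the sphere of radius two about a vertex $v$ has six vertices and the sphere of radius three has nine; pairing each radius-two vertex with a radius-three neighbour uses six edges, and the three leftover radius-three vertices (one in each of the three hexagonal faces at $v$) are taken as singletons, giving a cover of size $9=1+8$. This is exactly the kind of cover drawn for the $m=3$ and $m=4$ clusters handled inside the proof of Theorem~\ref{mainthm}.

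For the inductive step, choose a non-cut vertex $w$ of $C$ and put $C'=C\setminus\{w\}$, a connected subgraph on $m-1$ vertices; by the induction hypothesis $D(C')$ has a cover of size at most $m+7$. The crux is to control how $D$ changes. Using that $w$ has a neighbour in $C'$, one checks that any vertex newly entering $D(C)$ must lie at distance exactly three from $w$ and at distance at least four from $C'$ — in particular it is reached from $w$ along an edge of $w$ that does not enter $C'$ — whereas any vertex leaving $D(C)$ is a neighbour of $w$. Girth $6$ then cuts the possible local pictures of $H$ around $w$ down to a short list, and in each of them every new vertex of $D(C)$ has a neighbour that still lies at distance two or three from $C$, so the new vertices can be absorbed by re-pairing within $D(C)$. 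Tracking which covering sets are freed when $w$'s neighbours drop out of $D$, one shows the cover grows by at most one set, producing a cover of $D(C)$ of size at most $m+8$.

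The step I expect to be hardest is precisely this bookkeeping. Deleting $w$ can remove a couple of vertices from the distance-two-or-three shell while creating several new ones, and naively extending the old cover can cost two extra sets instead of one; pushing it down to $+1$ requires choosing $w$ carefully (preferring a non-cut vertex with two or three neighbours in $C$, which creates fewer new shell vertices), exploiting the slack present whenever the inductive cover is not of minimum size, and a modest case analysis of the honeycomb around $w$. If the induction turns out too fragile, a viable alternative is a direct estimate: the collar of $C$ has at most $m+2$ vertices because $C$ has at least $m-1$ internal edges, and one then bounds how far the radius-three shell can spread past a collar of that size; but since the constant $8$ is tight already at $m=1$, any approach must yield exact rather than merely asymptotic estimates.
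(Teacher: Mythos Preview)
Your reduction to a covering problem and your plan to induct on $m$ are exactly the paper's approach: the paper proves Lemma~\ref{lem3} via a stronger Lemma~\ref{lem4}, which asserts that $D(C)$ admits a \emph{partition} into at most $m+8$ singletons and edges. The paper also inducts by adding a spanning-tree leaf, which is equivalent to your deletion of a non-cut vertex.

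The gap is the one you yourself flag as ``hardest'': your inductive hypothesis is too weak to make the bookkeeping close. When the vertex $w$ has only one neighbour in the smaller cluster (and this case is unavoidable---for a path-shaped cluster every non-cut vertex is a leaf), as many as four vertices can enter $D$ while only two leave, and without extra control the re-pairing costs $+2$ sets rather than $+1$. Concretely: a new distance-three vertex $z$ does have a neighbour $y$ still in $D(C)$, but $y$ may already be paired with some third vertex $x\in D(C)$, and splitting $\{x,y\}$ to form $\{y,z\}$ and $\{x\}$ gains nothing. Your suggestion to ``prefer a non-cut vertex with two or three neighbours in $C$'' does not help, since paths have no such vertex.

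The paper's remedy is to strengthen the inductive hypothesis: in addition to the size bound, require that every vertex at distance three from $C$ which has two internally disjoint length-$3$ paths to a single vertex of $C$ lies in a \emph{singleton} set of the partition. This extra clause is precisely what forces the critical boundary vertices (the analogues of vertex $9$ in Fig.~7a and vertices $3,5$ in Fig.~7b) to be unpaired, so that each can absorb a newcomer at zero cost; it is also straightforward to verify that the clause propagates through the induction. Adding this strengthening is the missing idea that turns your outline into a proof.
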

Instead of proving Lemma~\ref{lem3} directly, we prove Lemma~\ref{lem4}, which is stronger, but also easier to prove by induction.
\begin{lemma}
\label{lem4}
If $C$ is a cluster with $m$ vertices, then we can partition the set of vertices at distances two and three from $C$ into $m+8$ sets, each of which consists of two adjacent vertices or a single vertex.  Furthermore, if a vertex $v$ is distance three from $C$ and in fact has two disjoint paths of length 3 to the same vertex $u\in C$, then $v$ is in a set of size 1\ in the partition.
\end{lemma}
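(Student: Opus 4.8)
The plan is to prove Lemma~\ref{lem4} by induction on $m$, for which it is convenient (and harmless) to prove the statement for every connected vertex set $C$, clusters being a special case. The point is that the set of vertices at distance two or three from $C$ changes only inside a bounded region when $C$ grows by one vertex.

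For the base case $m=1$, let $C=\{v\}$. Since the hexagonal grid has girth~$6$, the ball of radius two about $v$ is a tree: $v$ has three neighbors, six vertices at distance two, and nine vertices at distance three, the last consisting of the three antipodes of $v$ in the three hexagonal faces through $v$, together with exactly one further neighbor (``child'') of each of the six distance-two vertices. Pair each distance-two vertex with its child and leave each face-antipode as a singleton; this gives $6+3=9=1+8$ parts, each a single vertex or a single edge, and the face-antipodes are precisely the vertices joined to $v$ by two vertex-disjoint paths of length three, so the extra condition holds.

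For the inductive step, write $C'=C\cup\{x\}$ where $C$ is connected with $m$ vertices and $x\notin C$ is adjacent to $C$ (such an $x$ exists: delete a leaf of a spanning tree of $C'$). Let $\Sigma(C)=\{u: d(u,C)\in\{2,3\}\}$. Since $d(x,C)=1$ we have $|d(u,x)-d(u,C)|\le 1$ for every $u$, and a short case analysis on $d(u,C)$ then shows $\Sigma(C')=(\Sigma(C)\setminus D)\cup A$, where $D$ is the set of neighbors of $x$ at distance exactly two from $C$ (so $|D|\le 3-d_{C'}(x)\le 2$), and $A$ is the set of vertices at distance exactly three from $x$ and exactly four from $C$ (hence a subset of the at most nine vertices at distance three from $x$). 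Thus the given partition of $\Sigma(C)$ is left untouched outside the radius-three ball about $x$, and any vertex at distance three from $C'$ that is joined to some $u\in C'$ by two disjoint length-three paths either already had this property for some $u\in C$ (so it is already a singleton) or lies in $A$ and is a face-antipode of $x$.

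The remaining work --- and the step I expect to be the main obstacle --- is a local case analysis near $x$: cases on $d_{C'}(x)\in\{1,2,3\}$ and on which of the three hexagonal faces at $x$ already meet $C$ determine $|A|$ and $|D|$, and in each case one deletes the at most two vertices of $D$ from their parts, places each face-antipode of $x$ lying in $A$ in its own singleton, fits the remaining new vertices into new parts (each paired with a distinct neighbor one step farther from $x$), and, if needed, splits a bounded number of existing edges, all arranged so that the total number of parts increases by exactly one. The tree-like local geometry from the base case, together with the honeycomb's face structure, is what forces this count to be exactly ``$+1$'' in every configuration; the delicate point is to do the bookkeeping while simultaneously keeping every forced singleton (the vertices bad for $C'$) in a part of size one. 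This is why we prove Lemma~\ref{lem4} rather than attack Lemma~\ref{lem3} directly.
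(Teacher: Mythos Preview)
Your plan is essentially the paper's: induct on $m$, grow the connected set by one vertex $x$, and repair the partition locally near $x$. Your base case $m=1$ is correct and cleaner than the paper's (which starts at $m=3$ because clusters have at least three vertices); your identification of the face-antipodes of $v$ as exactly the distance-three vertices with two disjoint length-three paths is right.

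Two points. First, the ``exactly $+1$'' claim is not true in every configuration, and you should not try to force it. When $x$ has two neighbours in $C$, only one neighbour of $x$ can lie in $D$, but that vertex had to be a singleton in $\mathcal P$ (it is the face-antipode of both neighbours of $x$ on the hexagon they share), and only two new vertices enter $\Sigma(C')$; the paper simply absorbs those two into existing singletons, so $|\mathcal P'|\le|\mathcal P|$. When $x$ has three neighbours in $C$, $\Sigma(C')=\Sigma(C)$ and nothing changes at all. Since the lemma only needs $|\mathcal P'|\le m+9$, this is harmless, but your write-up should say ``at most $+1$'' rather than ``exactly $+1$''.

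Second, you have correctly identified the one-neighbour case as the real work, but you have not done it. The paper's argument here is concrete: with $x$ having a single neighbour in $C$, it deletes the two old parts containing the two distance-two vertices that flank the new ``tip'', inserts three parts (two edges and one singleton) covering those vertices and the three newcomers above $x$, and then enlarges one existing singleton to an edge to absorb the remaining newcomer---net change $+1$. The reason the singleton is available is precisely the ``furthermore'' clause you are carrying through the induction; this is where that hypothesis earns its keep. Your sketch (``pair remaining new vertices with a neighbour one step farther out, split a bounded number of existing edges'') is the right shape, but to finish you must actually name the parts you delete, the parts you add, and verify that any vertex that newly acquires two disjoint length-three paths to $C'$ (in particular the face-antipode of $x$ on the face not meeting $C$) ends up alone.
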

Lemma~\ref{lem4} immediately implies Lemma~\ref{lem3}, since each set in the partition intersects at most one cluster.  If a vertex $v$ is distance at most three from $C$, then either $v$ is adjacent to $C$ or $v$ is in the partition for $C$; we call such a vertex $v$ {\it covered}.  If a vertex is not covered, then we call it {\it uncovered}.
\begin{figure}[thb]
\begin{center}
\begin{tikzpicture}[scale=.60]
\useasboundingbox (9.5,2.10) rectangle (14,7);
\begin{scope}[yshift = 0 cm, xshift = 1cm]
\gridsmall{7.0}{5.2};
\begin{scope}[xshift = \root cm, yshift = 3 cm, x=.866 cm]
\draw[fill=black]
(1,2.5) node[below]{1} 
(2,2) node[above]{2} 
(3,2.5) node[below]{3} 
(4,2) node[above]{4} 
(5,2.5) node[below]{5} 
(2,1) node[below]{6} 
(3,.5) node[above]{7} 
(4,1) node[below]{8} 
(5,.5) node[above]{9} 
(6,1) node[below]{10}
(2,-1) node[above]{11} 
(3,-.5) node[below]{12};
\draw (3,-1.9) node[]{Fig. 7a: The first induction step.}; 
\draw[line width=.1cm, fill=black] 
(2,-1) \dot -- (3, -.5) \dot -- (3,.5) \dot;
\end{scope}
\end{scope}
\begin{scope}[xshift = 15cm]
\gridsmall{5.2}{3.5};
\begin{scope}[xshift = \root cm, yshift = 3 cm, x=.866 cm]
\draw[fill=black, line width=.1cm]
(1,3.5) node[above]{1} 
(3,3.5) node[above]{2} 
(1,2.5) node[below]{3} 
(2,2) node[above]{4} 
(3,2.5) node[below]{5} 
(1,.5) \dot node[above]{6} 
(2,1) node[below]{7} 
(3,.5) \dot node[above]{8};
\draw (1.5,-1.9) node[]{Fig. 7b: The second induction step.}; 
\end{scope}
\end{scope}
\end{tikzpicture}
\end{center}
\end{figure}

\begin{proof}
We use induction on the size of $C$, growing $C$ by one vertex at each step.  The base case $|C|=3$ is easy and is shown in Fig.~3b.  Let $C'$ be a cluster of size $k+1$ and let $T$ be a spanning tree of $C'$ with a leaf $v$.  By deleting $v$ we reach a cluster $C$, of size $k$, for which the induction hypothesis holds; let $\part$ be the desired partion for $C$ of size $k$.  
We consider different induction steps, depending on whether $v$ has one, two, or three neighbors in $C$.  Note that if $v$ has three neighbors in $C$, then $\part$ is still a valid partition for $C'$.  So we consider below the cases when $v$ has one or two neighbors in $C$.

First we consider a cluster $C'$ that is built from $C$ by adding a vertex with only one neighbor in $C$.  Let 7 be the new vertex (see Fig.~7a); by symmetry, we may assume that $11,12\in C$.  We assume that 1, 3, 5 are uncovered (the case where one or more of these vertices is covered is easier, so we omit the details).  We will modify $\part$ to form a new partition that also includes 1, 3, 5.  Beginning with $\part$, we delete the sets that contain 2 and 4, and we add the sets $\{1,2\}$, $\{3\}$, and $\{4,5\}$.  
As above, we assume that 10 is uncovered (since the other case is easier).
Since 10 is uncovered, $\part$ contains the set $\{9\}$; replace the set $\{9\}$ with the set $\{9,10\}$.  We now have a partition $\part'$ for $C'$ and $|\part'|=k+1$.  

Second we consider a cluster $C'$ that is built from $C$ by adding a vertex with two neighbors in $C$.  Let 7 be the new vertex added to $C$ and let 6 and 8 be vertices already in $C$ (see Fig.~7b).  If 1 is uncovered, then 3 is distance three from $C$; similarly, if 2 is uncovered, then 5 is distance three from $C$.  We assume that 1 and 2 are uncovered by $C$ (the case where one or both of 1 and 2 is covered is easier, so we omit the details).  So 3 and 5 are both in sets of size one; we thus replace $\{3\}$ with $\{1,3\}$ and we replace $\{5\}$ with $\{2,5\}$.  
We now have a partition $\part'$ for $C'$ and $|\part'|=k$.  
\end{proof}

\section{Acknowledgement}
Thank you to Ryan Martin, who brought this problem to our attention.
Thank you also to an anonymous referee who noticed inconsistencies and suggested improvements.

\end{document}